\newenvironment{breakablealgorithm}
  {
   \begin{center}
     \refstepcounter{algorithm}
     \hrule height.8pt depth0pt \kern2pt
     \renewcommand{\caption}[2][\relax]{
       {\raggedright\textbf{\fname@algorithm~\thealgorithm} ##2\par}%
       \ifx\relax##1\relax 
         \addcontentsline{loa}{algorithm}{\protect\numberline{\thealgorithm}##2}%
       \else 
         \addcontentsline{loa}{algorithm}{\protect\numberline{\thealgorithm}##1}%
       \fi
       \kern2pt\hrule\kern2pt
     }
  }{
     \kern2pt\hrule\relax
   \end{center}
  }
\newtheorem{theorem}{Theorem}
\numberwithin{theorem}{section}
\newtheorem{proposition}[theorem]{Proposition}
\newtheorem{lemma}[theorem]{Lemma}
\newtheorem{corollary}[theorem]{Corollary}
\newtheorem{remark}[theorem]{Remark}
\newtheorem{example}[theorem]{Example}
\algnewcommand\algorithmicforeach{\textbf{for each}}
\begin{document}
\title{Poincar\'e Series of Divisors on Graphs and Chains of Loops: Rationality and Algorithms}
\author{Madhusudan Manjunath\footnote{the author was supported by a MATRICS grant of the Department of Science and Technology (DST), India during the course of this work}}
\maketitle

We study Poincar\'e series associated to a finite collection of divisors on i. a finite graph and  ii. a certain family of metric graphs called chain of loops.  Our main results are proofs of rationality of the Poincar\'e series and algorithms for computing it in both these cases. The main tools used in the proof of rationality are the following. For graphs, we study a certain homomorphism from a free Abelian group of finite rank to the direct sum of the Jacobian of the graph and the integers. For chains of loops, our main tool is an analogue of Lang's conjecture for Brill-Noether loci on a chain of loops and adapts the proof of rationality of the Poincar\'e series of divisors on an algebraic curve (over an algebraically closed field of characteristic zero). Our algorithms are based on a closer study of the objects involved in the proof of rationality, for instance, computing the fibres of certain homomorphisms and lattice point enumeration in rational polyhedra.  

\section{Introduction}

Let $L$ be a line bundle on an algebraic variety $X$, a fundamental problem in algebraic geometry called the \emph{Riemann-Roch problem} is to compute the dimension of the space of global sections $h^0(L^ n)$ of the powers of $L$ for large $n$.  A closely related problem is that of  the rationality of the generating function $\sum_{n=1}^{\infty}h^0(L^ n)z^n$.
This generating function is called the  \emph{Poincar\'e series of $L$}.  We refer to the work of Cutkosky and Srinivas \cite{CutSri93} for more details on this topic.
 Cutkosky \cite{Cut03} studied the following multigraded generalisation of the Poincar\'e series  by fixing a finite collection of line bundles $L_1,\dots,L_k$ on $X$ and considering the generating function $\sum_{(n_1,\dots,n_k) \in \mathbb{N}^k}h^0(L_1^{n_1} \otimes L_2^{n_2} \otimes \cdots \otimes L_k^{n_k}) z_1^{n_1} \cdots z_k^{n_k}$ called the \emph{Poincar\'e series of $L_1,\dots,L_k$\footnote{We shall take $\mathbb{N}$ to be the set of non-negative integers throughout the paper.}}. A subtle aspect of this theory is that the Poincar\'e series turns out to be rational (for all choices of $L_1,\dots,L_k$) on smooth curves over an algebraically closed field of characteristic zero but not necessarily rational for smooth curves over an algebraically closed field of positive characteristic and for singular curves. 

In the following, we consider analogues of  Poincar\'e series of divisors on finite graphs and their metrized version, namely compact metric graphs (also known as abstract tropical curves).

\subsection{Poincar\'e Series of Divisors on a Graph}

Given a finite sequence of divisors $D_1,\dots,D_k$ on a finite, connected, loop-free, multigraph $G$. Consider the formal sum: 
 
 \begin{center}
$P_{G}(z_1,\dots,z_k)=\sum_{(n_1,\dots,n_k) \in \mathbb{N}^k}(r_G(n_1D_1+\dots+n_kD_k)+1)z_1^{n_1} \cdots z_k^{n_k}$
\end{center}

where $r_G(D)$ is the rank of the divisor $D$ on the graph $G$.  We refer to this as the \emph{Poincar\'e series associated to divisors $D_1,\dots,D_k$ on $G$}. Note that the Poincar\'e series depends not only on the graph but also on the divisors $D_1,\dots,D_k$.
For easy readability, we suppress this dependence  on the divisors while denoting Poincar\'e series and other related objects. 
  
Using the inequality that the rank of a divisor is at most its degree for every divisor of non-negative degree, it follows that $P_{G}$ is absolutely convergent in the region $\{ (z_1,\dots,z_k)|~|z_i|<1 \textsl{ for all } i \}$.  A natural question in this context is whether $P_{G}(z_1,\dots,z_k)$ is a rational function\footnote{ Recall that a power series in $z_1,\dots,z_k$ is called \emph{rational} if there exists a rational function $f/g$ where  $f,~g \in \mathbb{C}[z_1,\dots,z_k]$ such that the power series agrees with this rational function at every $(z_1,\dots,z_k) \in \mathbb{C}^k$ where it is absolutely convergent.}.   We answer this question in the affirmative, more precisely we show the following.

\begin{theorem}\label{grapoinrat_theo} {\rm ({\bf Rationality of Poincar\'e Series of Divisors on Graphs})}
For any finite, connected, multigraph $G$ without loops and any finite sequence of divisors $D_1,\dots,D_k$ on $G$, the Poincar\'e series $P_{G}(z_1,\dots,z_k)$ is rational.  
\end{theorem}

{\bf Main Ingredients of the Proof:}  A key ingredient is the rationality of lattice point enumerating functions in rational polyhedra (see \cite{Bar99} for a detailed treatment).  Other key ingredients are i. the Jacobian group ${\rm Jac}(G)$ of $G$, in particular its finiteness, ii. the group homomorphism $\phi_{G}:\mathbb{Z}^{k} \rightarrow {\rm Div}(G)/{\rm Prin}(G)$ given by $(n_1,\dots,n_k) \rightarrow [\sum_{i=1}^{k}n_i D_i]$ where ${\rm Div}(G)$ and ${\rm Prin}(G)$ are the group of divisors and the group of principal divisors of $G$, respectively, and $[D]$, for a divisor $D$, is its linear equivalence class. We refer to Subsection \ref{poingra_sect}  for more details. \qed

The following are two cases that shed light on the general situation.
 
\begin{itemize}
\item {\bf The case $k=1$:}  If ${\rm deg}(D_1)<0$, then $P_{G}(z_1)=0$ (since the rank of a divisor of negative degree is minus one).   If ${\rm deg}(D_1)>0$, then the rationality of $P_{G}$ follows from the observation that for $n_1>>0$, the Riemann-Roch theorem for graphs  \cite[Theorem 1.12]{BakNor07}   implies that $r_G(n_1 D_1)=n_1{\rm deg}(D_1)-g$ where $g$ is the genus of the graph. If ${\rm deg}(D_1)=0$, then  $P_{G}(z_1)=\sum_{n_1 \in {\rm ker}(\phi_{G}) \cap \mathbb{N}} z_1^{n_1}$ which in turn is rational since ${\rm ker}(\phi_{G})$ is a subgroup of $\mathbb{Z}$. In fact, $P_{G}$ is of the form $1/(1-z_1^c)$ for some positive integer $c$ (in fact, $c$ is the order of $[D_1]$ in ${\rm Jac}(G)$). 

\item {\bf The case ${\rm deg}(D_i)=0$ for all $i$:}  The image of $\phi_{G}$ is finite (thanks to the finiteness of the Jacobian group of $G$) and hence, ${\rm ker}(\phi_{G})$ is a finite index sublattice of $\mathbb{Z}^k$. The Poincar\'e series $P_{G}$ is equal to $\sum_{(n_1,\dots,n_k) \in {\rm ker}(\phi_G)\cap \mathbb{N}^k} z_1^{n_1}\cdots z_k^{n_k}$ and hence, is the lattice point enumerating function (with respect to the lattice ${\rm ker}(\phi_{G})$) of the non-negative orthant cone.

\end{itemize}

\subsection{Poincar\'e Series of Divisors on a Tropical Curve}

In the following, we formulate a notion of Poincar\'e series of a finite collection of divisors on an abstract tropical curve. Recall that an abstract tropical curve, is by definition, a compact metric graph, i.e. a compact metric space where every point has a neighbourhood isometric to a star-shaped set,  we refer to Subsection \ref{div_app} for more details, also see \cite[Subsection 3.3]{MikZha08}, \cite[Section 3]{BakFab11}.   A compact metric graph can be represented by a finite graph with edge set $E$ along with a function $\ell:E \rightarrow \mathbb{R}_{\geq 0}$, the function $\ell$ can be interpreted as an assignment of lengths to the edges. 

Abstract tropical curves share various properties with smooth, proper algebraic curves. For instance, they satisfy an analogue of the Riemann-Roch theorem, have an associated Jacobian group and a corresponding Abel Jacobi map  \cite{GatKer08,MikZha08,BakFab11}.  In a related context, compact metric graphs occur as skeleta of the Berkovich analytification of a smooth, proper algebraic curve over a non-archimedean field \cite{BakPayRab16}. In the following, we simply use the term ``tropical curves'' to refer to abstract tropical curves. 

 Given a finite sequence of divisors $D_1,\dots,D_k$ on the tropical curve $\Gamma$.   The Poincar\'e series associated to $D_1,\dots,D_k$ is defined as: 

 \begin{center}

$P_{\Gamma}(z_1,\dots,z_k)=\sum_{(n_1,\dots,n_k) \in \mathbb{N}^k}(r_{\Gamma}(n_1D_1+\dots+n_kD_k)+1)z_1^{n_1} \cdots z_k^{n_k}$

\end{center}

where $r_\Gamma(D)$ is the rank of the divisor $D$ on $\Gamma$. As in the case of graphs, the inequality that the rank of a divisor is at most its degree for every divisor of non-negative degree implies that $P_{\Gamma}$ is absolutely convergent in the region $\{ (z_1,\dots,z_k)|~|z_i|<1 \text{ for all } i \}$.

Next, we consider the problem of rationality of $P_{\Gamma}$. We start by noting some key differences between the case of finite graphs and tropical curves. 
 The Jacobian of a finite graph is a finite Abelian group but the Jacobian of a tropical curve is (except in genus zero) not a finite group (nor a finitely generated group) but is a real torus of dimension $g$, where $g$ is the genus of $\Gamma$ (the first Betti number of the simplicial complex underlying $\Gamma$) \cite[Section 6]{MikZha08},~\cite[Page 364]{BakFab11}.

Furthermore, consider the group homomorphism
$\phi_{\Gamma}: \mathbb{Z}^k \rightarrow {\rm Div}(\Gamma)/{\rm Prin}(\Gamma)$ defined as follows:  
\begin{center}
$\phi_{\Gamma}(m_1,\dots,m_k)=[\sum_{i=1}^{k}m_iD_i]$
\end{center}

where $[.]$ is the associated linear equivalence class in ${\rm Div}(\Gamma)/{\rm Prin}(\Gamma)$.  Note that ${\rm Div}(\Gamma)/{\rm Prin}(\Gamma)$ is isomorphic to ${\rm Jac}(\Gamma) \oplus \mathbb{Z}$ and is the analogue of the Picard group of an algebraic curve.  The image of $\phi_{\Gamma}$ can be more ``complicated'' than its counterpart for graphs.  For instance, it can be infinite even when each $D_1,\dots,D_k$ has degree zero as in the following example. Suppose that $\Gamma$ is a cycle of unit edge length (this is a tropical curve of genus one, i.e. a tropical elliptic curve) and  its Jacobian group is the unit circle $\mathbb{S}^1$.  Consider the parameterisation $e^{2\pi i \theta}$ (where $\theta \in [0,2\pi)$) for $\mathbb{S}^1$. Let $k=1$ and let $p$ be the point in $\Gamma$ whose image in its Jacobian under the Abel-Jacobi map (with respect to a fixed base point $p_0$) is  the point $e^{2 \pi i \phi}$ for an irrational number $\phi$.  Note that such a point $p$ exists since there is a bijection between $\Gamma$ and its Jacobian,  see \cite{Vig09} for more details. 

We set $D_1=(p)-(p_0)$. Since the point $e^{2 \pi i \phi}$  has infinite order in ${\rm Jac}(\Gamma)$,  the image of $\phi_{\Gamma}$ is infinite. Equivalently, the kernel of $\phi_{\Gamma}$  is trivial.   Furthermore, by Weyl's equidistribution theorem \cite[Pages 11--14]{Kor88}, the image of $\phi_{\Gamma}$ is equidistributed in the Jacobian.  In this case, however, the Poincar\'e series is zero since the rank of every multiple of $D_1$ is minus one.  But, via  a slight modification,  we can construct examples (with $k>1$) where the Poincar\'e series is non-zero and the image of $\phi_{\Gamma}$ (restricted to divisor classes with degree in $[0,2g-2]$) is infinite. For instance, set $k=2$, choose $D_1$ to be a divisor of degree $g$ (the genus of $\Gamma$) and $D_2$ to be a divisor of degree zero such that $[D_2]$ has infinite order in ${\rm Jac}(\Gamma)$.   Hence, unlike in the case of graphs, analysing the fibre over each divisor class with degree in $[0,2g-2]$ in the image of $\phi_{\Gamma}$ does not lead to a proof of rationality. 
 
 \begin{figure}
  \includegraphics[width=12cm]{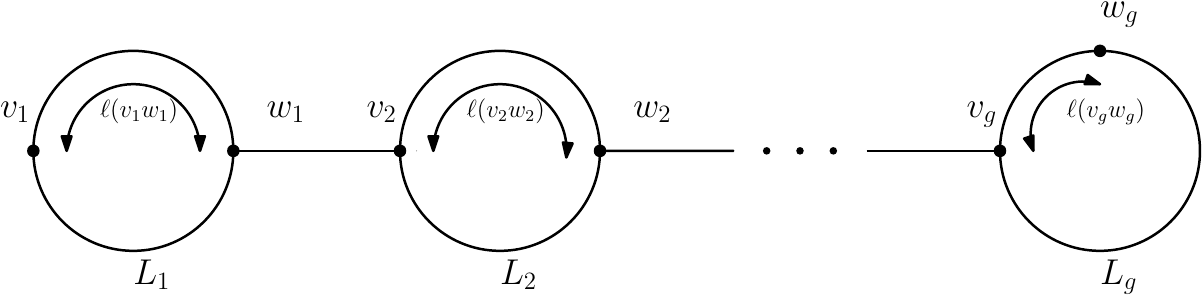}
  \caption{$\Gamma_g$: Chain of Loops of Genus $g$}\label{chainofloops_fig}
\end{figure}

To the best of our knowledge, the problem of rationality of Poincar\'e series of divisors on arbitrary metric graphs is open. In this paper, we study the Poincar\'e series of divisors  on tropical curves whose combinatorial type, i.e. the underlying graph is a chain of loops, see Figure \ref{chainofloops_fig}.  Chains of loops is a well-studied family of tropical curves and has found several applications so far.  For instance,  as an ingredient in the proof that the moduli space of curves of genus 22 and 23 are of general type \cite{FarJenPay20},  a Brill-Noether theory for algebraic curves with fixed gonality \cite{JenRan17},  a proof of the maximal rank conjecture for quadrics \cite{JenPay16} and a proof of the non-existence part of the Brill-Noether theorem for algebraic curves \cite{CooDraPayRob12}.  We show the following rationality result for the Poincar\'e series of divisors on chains of loops.

\begin{theorem}\label{poinratcol_theo} {\rm ({\bf Rationality of Poincar\'e Series of Divisors on Chains of Loops})}
Fix non-negative integers $g$ and $k$. Let $\Gamma_g$ be a chain of loops of genus $g$.  For any finite collection of divisors $D_1,\dots,D_k$ on $\Gamma_g$, the Poincar\'e series $P_{\Gamma_g}(z_1,\dots,z_k)$ is rational.  
\end{theorem} 

{\bf Proof Outline:} We adapt a strategy analogous to Cutkosky's proof of rationality of the corresponding Poincar\'e series for smooth algebraic curves over an algebraically closed field $\mathbb{K}$ of characteristic zero \cite{Cut03}. The main idea behind this proof is to apply Lang's conjecture for subvarieties of (semi-)Abelian varieties, proved by McQuillan \cite{McQ95},\cite[Subsection F.1.1]{HinSil00}, to the Brill-Noether loci of $C$, where $C$ is the underlying algebraic curve. Suppose that $D'_1,\dots,D'_k \in {\rm Div}(C)$ where  ${\rm Div}(C)$ is the group of divisors on $C$.  Fix a point $p_0 \in C$. For integers $r$ and $d$, recall that the Brill-Noether locus $W^r_d(C)$ (with respect to $p_0$) is defined as follows \footnote{This definition is a variant of the more standard definition: $W^r_d(C)=\{ [D] \in {\rm Pic}^d(C)|~r_C(D) \geq r\}$. Note that our definition ensures that $W^r_d(C)$ is a subset of the Jacobian and is related to the standard definition by a translation by $d \cdot (p_0)$. We employ an analogous notion also in the tropical case. }: 

\begin{center} $W^r_d(C)=\{ [D'] \in {\rm Jac}(C)|~r_C(D'+d \cdot (p_0)) \geq r\}$. \end{center}

Note that $r_C(D'+d \cdot (p_0))$ does not depend on the choice of representative in $[D']$.   Let $\bar{D'_i}=D'_i-d'_i \cdot (p_0)$, where $d'_i$ is the degree of $D'_i$.

 Consider the homomorphism  $\bar{\phi}_{C}: \mathbb{Z}^k \rightarrow {\rm Jac}(C)$ given by 
 
 \begin{center} $\bar{\phi}_{C}(m_1,\dots,m_k)=[\sum_{i=1}^{k}m_i \cdot \bar{D'_i}]$.\end{center}

The image of $\bar{\phi}_{C}$ is a finitely generated subgroup $\mathcal{H}$ of ${\rm Jac}(C)$.  The Brill-Noether locus $W^r_d(C)$ is a (closed) subvariety of the Jacobian \cite[Pages 107--152]{ArbCorGriHar85}. By Lang's conjecture, there exists a finite collection of  Abelian subvarieties $\mathcal{A}_1,\dots,\mathcal{A}_s$ of ${\rm Jac}(C)$ and corresponding translates $\gamma_1,\dots,\gamma_s \in \mathcal{H}$  such that the following holds: 

\begin{enumerate}

\item $\gamma_i+\mathcal{A}_i(\mathbb{K}) \subseteq W^r_d(C)$ for each $i$. 

\item $W^r_d(C) \cap \mathcal{H}=\cup_{i=1}^{s}(\gamma_i+(\mathcal{A}_i(\mathbb{K}) \cap \mathcal{H}))$.

\end{enumerate}

The rationality then follows from considering the fibre of $\bar{\phi}_{C}$ over each coset $\gamma_i+(\mathcal{A}_i(\mathbb{K}) \cap \mathcal{H})$.  

Taking cue from this, we study the intersection of the tropical Brill-Noether locus with the subgroup generated by $[\bar{D}_1],\dots,[\bar{D}_k] \in {\rm Jac}(\Gamma_g)$ where for each $i$, $\bar{D}_i=D_i-d_i \cdot (w_g)$ and $d_i$ is the degree of $D_i$.  Our main technical ingredient, developed in Section \ref{LangBNCol_sect}, is an analogue of Lang's conjecture for Brill-Noether loci on chains of loops. The rationality of $P_{\Gamma_g}$  then follows analogous to the case of both algebraic curves  and graphs. We refer to Section \ref{poinratcol_sect} for more details. \qed

{\bf Lang's Conjecture for Brill-Noether Loci on Chains of Loops:}  In the following, we state Lang's conjecture for Brill-Noether loci on chains of loops.  Recall that given integers $r$ and $d$, the Brill-Noether locus $W^r_d(\Gamma_g) \subseteq {\rm Jac}(\Gamma_g)$ (with respect to the fixed point $w_g \in \Gamma_g$) \cite[Subsection 1.2]{Pfl17} is defined as follows:  
 \begin{center} $W^r_d(\Gamma_g)=\{ [D] \in {\rm Jac}(\Gamma_g)|~r_{\Gamma_g}(D+d \cdot (w_g)) \geq r\}$. \end{center}

\begin{theorem}{\rm ({\bf  Lang's Conjecture for Brill-Noether Loci on Chains of Loops})}\label{langbncol_theo}
Let $H$ be a subgroup of ${\rm Jac}(\Gamma_g)$. Suppose that $r,~d$ are integers such that $W^r_d(\Gamma_g) \cap H \neq \emptyset$.  There is a finite collection of tropical Abelian subvarieties $A_1,\dots,A_s$ of ${\rm Jac}(\Gamma_g)$  and translates $\gamma_1,\dots,\gamma_s \in H$ such that the following holds:

\begin{itemize}

\item $\gamma_i+A_i \subseteq W^r_d(\Gamma_g)$ for each $i$ from one to $s$.

\item $W^r_d(\Gamma_g) \cap H=\cup_{i=1}^{s} (\gamma_i+(A_i \cap H))$.

\end{itemize}
\end{theorem}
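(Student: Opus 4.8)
The plan is to build directly on Pflueger's structural theorem (\cite[Theorem 1.4]{Pfl17}), which expresses a non-empty $W^r_d(\Gamma_g)$ as a finite union of translates of \emph{standard topological subtori} $T_{S_1},\dots,T_{S_m}$ of ${\rm Jac}(\Gamma_g)$. The first step is to convert each such topological subtorus into the language of subgroups. As the excerpt has already set up, each standard topological torus $T_S$ is the coset $t_S+\mathfrak{T}_S$, where $\mathfrak{T}_S$ is the corresponding \emph{standard subgroup torus} (a genuine subgroup of ${\rm Jac}(\Gamma_g)$ by Proposition \ref{subgroup_prop}) and $t_S=[-\sum_{j\notin S}(\mathfrak{o}_j)+(g-|S|)\cdot(w_g)]$. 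Hence any translate $\beta+T_{S}$ appearing in Pflueger's decomposition can be rewritten as a coset $\beta'+\mathfrak{T}_{S}$ of a genuine tropical Abelian subvariety, with $\beta'=\beta+t_{S}\in{\rm Jac}(\Gamma_g)$. After this rewriting we have
\begin{equation}\label{langplan_eq}
W^r_d(\Gamma_g)=\bigcup_{i=1}^{m}\bigl(\beta_i+\mathfrak{T}_{S_i}\bigr),
\end{equation}
a finite union of cosets of honest subgroup tori. This realises the candidate tropical Abelian subvarieties $A_i:=\mathfrak{T}_{S_i}$ and gives the containment property $\beta_i+A_i\subseteq W^r_d(\Gamma_g)$ for free, addressing the first bullet of the theorem.

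The second step is to intersect \eqref{langplan_eq} with the subgroup $H$ and to replace each ambient translate $\beta_i$ by a translate lying \emph{inside} $H$. Distributing the intersection over the union gives $W^r_d(\Gamma_g)\cap H=\bigcup_{i=1}^{m}\bigl((\beta_i+A_i)\cap H\bigr)$, so it suffices to analyse a single coset. The key observation is purely group-theoretic: if $(\beta_i+A_i)\cap H\neq\emptyset$, pick any witness $\gamma_i$ in this intersection; then $\gamma_i\in H$, and because $A_i$ is a subgroup one has $\beta_i+A_i=\gamma_i+A_i$, whence $(\beta_i+A_i)\cap H=\gamma_i+(A_i\cap H)$. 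Discarding the indices $i$ for which the intersection is empty, and relabelling the survivors $1,\dots,s$, produces exactly the translates $\gamma_1,\dots,\gamma_s\in H$ and tropical Abelian subvarieties $A_1,\dots,A_s$ demanded by the statement, with $W^r_d(\Gamma_g)\cap H=\bigcup_{i=1}^{s}(\gamma_i+(A_i\cap H))$ and $\gamma_i+A_i=\beta_i+A_i\subseteq W^r_d(\Gamma_g)$. The hypothesis $W^r_d(\Gamma_g)\cap H\neq\emptyset$ guarantees $s\geq 1$, so the collection is non-empty.

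I expect the main obstacle to lie not in the bookkeeping of step two—which is the ``elementary fact in group theory'' flagged in the outline—but in rigorously justifying step one, namely that Pflueger's topological subtori can be promoted to cosets of subgroup tori. This requires Proposition \ref{subgroup_prop} (that each $\mathfrak{T}_S$ is a subgroup) together with a clean verification that the translates produced by Pflueger's algorithm are of the form $\beta+T_{S}$ for a \emph{standard} index set $S$, so that the identity $T_S=t_S+\mathfrak{T}_S$ applies verbatim. The subtlety is that Pflueger works with topological subtori throughout and never needs the subgroup structure, so the correspondence $T_S\leftrightarrow\mathfrak{T}_S$ must be established independently here; this is precisely the ``additional effort'' alluded to in the remark following the outline. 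Once the dictionary between standard topological tori and standard subgroup tori is in place, the remainder of the proof is the short coset manipulation above.
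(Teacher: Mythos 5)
Your proposal is correct and follows essentially the same route as the paper's own proof: invoke Pflueger's decomposition, rewrite each standard topological subtorus $T_{S}$ as the coset $t_{S}+\mathfrak{T}_{S}$ of the standard subgroup torus (Proposition \ref{subgroup_prop}), discard the cosets missing $H$, replace each surviving translate by a witness $\gamma_i\in H$, and finish with the elementary identity $(\gamma_i+A_i)\cap H=\gamma_i+(A_i\cap H)$. The "additional effort" you anticipate in step one is exactly what the paper supplies via Proposition \ref{subgroup_prop} and the identity $T_S=t_S+\mathfrak{T}_S$ established in the introduction, so no further gap remains.
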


The proof of Theorem \ref{langbncol_theo} builds on a theorem of Pflueger \cite[Theorem 1.4]{Pfl17}  that states that $W^r_d(\Gamma_g)$ (when non-empty) is a finite union of certain subtori of ${\rm Jac}(\Gamma_g)$.  We refer to Section \ref{LangBNCol_sect} for more details.

\subsection{Algorithmic Aspects}

We present algorithms for computing the Poincar\'e series in both the cases: graphs (Subsection \ref{poingraphexp_sect}) and chains of loops (Section \ref{poicolalg_sect}). The algorithms build on the corresponding rationality proofs with additional effort towards making each step computationally amenable.  We summarise some key steps in the following. 

\begin{itemize}

\item In both the cases, an explicit description of the sum ``corresponding to divisors of degree greater than $2g-2$'' is involved, where $g$ is the genus of the underlying object. In particular, we provide an explicit description of the polyhedron underlying this sum.  We refer to Subsection \ref{poinlarge_subsect} for more details. 

\item In the case of graphs, a computation of the fibres of the homomorphism $\phi_{G}$ is another key ingredient of the algorithm (Subsection \ref{fibrecom_subsect}).

\item In the case of chains of loops, a computation of the fibres of the homomorphism $\bar{\phi}_{\Gamma_g}$ (an analogue of $\bar{\phi}_C$) over the Brill-Noether loci of $\Gamma_g$  (with an additional ``degree'' constraint) is another important ingredient of the algorithm. We refer to Subsection \ref{afflatcon_subsect} for more details.

\end{itemize}

{\bf Future Work:}  The current work initiates the study of Poincar\'e series of divisors on graphs and tropical curves.  A natural next step is to investigate the rationality of Poincar\'e series associated to arbitrary tropical curves. Other interesting directions include their classification and investigating the information they carry about the underlying graph or tropical curve.  A closely related direction is the study of moduli spaces of tropical curves (along with a finite collection of divisors) with a given Poincar\'e series. 

{\bf Acknowledgement:} We thank Steven Dale Cutkosky for several fruitful discussions on this topic and for his comments on an earlier draft. We thank Ye Luo for interesting discussions on this topic.  A part of this work was carried out while we were visiting the International Centre for Theoretical Sciences (ICTS), Bangalore. We thank ICTS for its kind hospitality. 

\section{Preliminaries}
In this section, we touch upon the main objects involved in this paper with the goal of keeping the exposition self-contained. 
\subsection{Divisor Theory on Graphs and Tropical Curves}\label{div_app}

 Let $G$ be a finite, connected, multigraph with set of vertices $V(G)$ and set of edges $E(G)$.  Following Baker and Faber \cite[Section 3]{BakFab11},  a metric graph  $\Gamma$ (also known as an abstract tropical curve) is a compact, connected metric space in which every point $p \in \Gamma$ has a neighbourhood isometric to a star-shaped set with (integer) valence $n_p \geq 1$. A star-shaped set is a set of the form:
\begin{center}
$S(n_p,r_p)=\{z \in \mathbb{C}|~z=t \cdot e^{2 k \pi i/n_p},~t \in [0,r_p],~ k \in \mathbb{Z}\}$
\end{center}

for an integer $n_p \geq 1$ and a non-negative real number $r_p$. Additionally, the set $S(n_p,r_p)$ is equipped with the path metric.  Note that the compactness of $\Gamma$ implies that it only has a finite number of points with valence not equal to two. Any finite, connected, multigraph $G$ with a length function $\ell: E(G) \rightarrow \mathbb{R}_{\geq 0}$ defines a metric graph and conversely, given any metric graph $\Gamma$ there exists a finite, connected, multigraph $G$ with a length function whose associated metric graph is $\Gamma$, called a \emph{model of $\Gamma$} .

Consider an isometry from a neighbourhood of a point $p \in \Gamma$ to a star-shaped set $S(n_p,r_p)$. A tangent at $p$ is the preimage, under such an isometry, of the segment $\{t \cdot e^{2 k \pi i/n_p}|~t \in [0,r_p]\}$ in $S(n_p,r_p)$  for a fixed integer $k$.  We denote by ${\rm Tan}_{\Gamma}(p)$, the set of equivalence classes of all tangents at $p$ where two tangents are equivalent if one is contained in the other and refer to its elements as \emph{tangent directions}. Note that this set does not depend on the choice of neighbourhood.



 Let ${\rm Div}(G)$ be the free Abelian group generated by the vertices of $G$ and let ${\rm Div}(\Gamma)$ be the free Abelian group generated by the points of $\Gamma$.  A divisor on $G$ (and on $\Gamma$) is an element in ${\rm Div}(G)$ (and ${\rm Div}(\Gamma)$), respectively.  We denote a divisor on $G$ by $\sum_{u \in V(G)}a_u (u)$ where $a_u$ is an integer and a divisor on $\Gamma$ by $\sum_{p \in \Gamma}a_p (p)$ where each $a_p$ is an integer and is zero for all but finitely many points of $\Gamma$.
 Both groups are naturally equipped with homomorphisms, namely ${\rm deg}: {\rm Div}(G) \rightarrow \mathbb{Z}$ and ${\rm deg}: {\rm Div}(\Gamma) \rightarrow \mathbb{Z}$ that takes $\sum_{u \in V(G)}a_u (u)$ to $\sum_{u \in V(G)} a_u$ and 
 $\sum_{p \in \Gamma}a_p (p)$ to $\sum_{p \in \Gamma} a_p$ respectively.  The image of a divisor under such a homomorphism is called its \emph{degree}.  A divisor is called \emph{effective} if every coefficient is non-negative.
 
Graphs and abstract tropical curves have a divisor theory akin to the divisor theory on an algebraic curve, we refer to \cite{BakNor07,MikZha08, GatKer08} for a detailed treatment of this topic.  In the following, we briefly recall the notions of rational functions, principal divisors, degree and rank.  A rational function on $G$ is a function $f_G:V(G) \rightarrow \mathbb{Z}$.  The principal divisor ${\rm div}(f_G)$ associated to $f_G$ is defined as  ${\rm div}(f_G)=\sum_{u \in V(G)}a_u(u)$  where $a_u=\sum_{e=(u,v) \in E(G)} (f_G(u)-f_G(v))$.  

 A rational function on $\Gamma$ is a real-valued, continuous, piecewise linear function $f_{\Gamma}: \Gamma \rightarrow \mathbb{R}$ with integer slopes (and finitely many pieces).  For a tangent direction $e \in {\rm Tan}_{\Gamma}(p)$, let ${\rm slp}_e(f_{\Gamma})$ be the outgoing slope of $f_{\Gamma}$ along $e$, i.e. $(f_{\Gamma}(q)-f_{\Gamma}(p))/\ell_t$ where $t$ is a tangent in the equivalence class of $e$, $q$ is the other end point of $t$ and $\ell_t$ is the length of $t$. Note that ${\rm slp}_e(f_{\Gamma})$ does not depend on the choice of $t$.  The principal divisor associated to $f_{\Gamma}$ is defined as  ${\rm div}(f_{\Gamma})=\sum_{p \in \Gamma}a_p(p)$ where $a_p=\sum_{e \in {\rm Tan}_{\Gamma}(p)}{\rm slp}_e(f_{\Gamma})$. Note that since $\Gamma$ only has finitely many points with valence not equal to two and $f_{\Gamma}$ only has finitely many pieces, we know that $a_p=0$ for all but finitely many points $p \in \Gamma$. 

In both cases, the set of principal divisors form a subgroup of the corresponding group of divisors. We denote that by ${\rm Prin}(G)$ and ${\rm Prin}(\Gamma)$ respectively.  Moreover,  ${\rm Prin}(G)$ is a subgroup of ${\rm Div}^{0}(G)$ and ${\rm Prin}(\Gamma)$  is a subgroup of ${\rm Div}^{0}(\Gamma)$ where ${\rm Div}^{0}(G)$  and ${\rm Div}^{0}(\Gamma)$ are the groups of divisors of degree zero on $G$ and $\Gamma$ respectively.

Let $D_1$ and $D_2$ be divisors both on $G$ or both on $\Gamma$. They  are said to be \emph{linearly equivalent} if $D_1-D_2$ is a principal divisor. 
Given a divisor $D$, its linear system $|D|$ is the set of all effective divisors linear equivalent to $D$. The rank $r_{G}(D)$ (or $r_{\Gamma}(D)$) of a divisor $D$ on $G$ (or $\Gamma$ respectively) is minus one if $|D|=\emptyset$ and otherwise, it is the maximum integer $r$ such that $|D-E| \neq \emptyset$ for every effective divisor of degree $r$.

{\bf Jacobians of Graphs and Tropical Curves:} We now briefly discuss the notion of Jacobian. The Jacobian group ${\rm Jac}(G)$ of a graph $G$ is defined as ${\rm Div}^{0}(G)/{\rm Prin}(G)$. Analogously, the Jacobian group ${\rm Jac}(\Gamma)$ of an abstract tropical curve $\Gamma$ is defined as ${\rm Div}^{0}(\Gamma)/{\rm Prin}(\Gamma)$. 

The Jacobian of $G$ is a finite group of order equal to the number of spanning trees of $G$. The relation between its structure and the underlying graph still remains elusive, we refer to  \cite{Woo17} for recent progress on this topic.
 
 The Jacobian of $\Gamma$ is isomorphic to $H_1(G_{\Gamma},\mathbb{R})/H_1(G_{\Gamma},\mathbb{Z})$ for any model $G_{\Gamma}$ of $\Gamma$ \cite[Theorem 6.2]{MikZha08}, \cite[Theorem 2.8]{BakFab11}. Furthermore, it is a real torus of dimension equal to the genus of $\Gamma$, where by genus we mean the first Betti number of any graph underlying a model of $\Gamma$.  
 

\subsection{Tropical Abelian Varieties} \label{tropabel_app}

A  \emph{principally polarised tropical Abelian variety} of dimension $g$ is a pair $(V/\Lambda,Q)$ where $V$ is a real vector space of dimension $g$, $\Lambda$ is a full rank sublattice of $V$ and $Q$ is a  symmetric, positive semidefinite quadratic form on $V$.
Furthermore,  the null space of $Q$ is rational with respect to $\Lambda$, i.e. it has a vector space basis consisting of elements in $\Lambda$ \cite[Section 5]{MikZha08}, \cite[Section 5]{BolBranChu17}.  The quadratic form $Q$ is called a \emph{principal polarisation} of $V/\Lambda$.
Since we only deal with principally polarised tropical Abelian varieties, in the following we simply refer to them as tropical Abelian varieties. Two tropical Abelian varieties $(V_1/\Lambda_1,Q_1)$ and $(V_2/\Lambda_2,Q_2)$ are isomorphic if there is a vector space isomorphism $\sigma:V_1 \rightarrow V_2$ that restricts to an isomorphism between $\Lambda_1$ and $\Lambda_2$ and satisfies $Q_1({\bf p})=Q_2(\sigma({\bf p}))$ for all ${\bf p} \in \Lambda_1$ \cite[Section 4A]{Cha12}.

The Jacobian of a metric graph  $\Gamma$ naturally carries the structure of a tropical Abelian variety that we now describe. Following \cite{BakFab11}, we fix a model $G_{\Gamma}$ for $\Gamma$.   The vector space $V$ in this case is $H_1(G_{\Gamma},\mathbb{R})$ and the lattice $\Lambda$ is $H_1(G_{\Gamma},\mathbb{Z})$. The quadratic form $Q_{\Gamma}$ on $H_1(G_{\Gamma},\mathbb{R}) \subset C_1(G_{\Gamma},\mathbb{R})$  is induced by the standard inner product on $C_1(G_{\Gamma},\mathbb{R})$ (with respect to the basis given by the edges of $G_{\Gamma}$ with each edge carrying an orientation), i.e. 

\begin{center}
$
\langle e_i,e_j \rangle=
\begin{cases}
\ell_{e_i}, \text{~if~}e_i=e_j,\\
0,\text{~otherwise}.
\end{cases}
$
\end{center}

where $\ell_{e_i}$ is the length of the edge $e_i$. Hence, \begin{center} $Q_{\Gamma}(\sum_{e \in E(G_{\Gamma})} \alpha_e \cdot e)=\sum_{e \in E(G_{\Gamma})}\alpha_e^2 \cdot \ell_e$ \end{center} where $E(G_{\Gamma})$ is the set of edges of $G_{\Gamma}$ with each edge carrying an orientation. Given a basis for $H_1(G_{\Gamma},\mathbb{Z})$,  the quadratic form $Q_{\Gamma}$ has an associated  $g \times g$ matrix that is called the \emph{period matrix} of $\Gamma$. For instance, consider the model $G_g$ for the chain of loops $\Gamma_g$ induced by $v_1,~w_g$ along with  the branch points. Suppose that $\mathcal{U}_i$ and $\mathcal{V}_i$ are the upper and lower edges of the loop $L_i$ oriented from $v_i$ to $w_i$. The period matrix for the chain of loops $\Gamma_g$ with respect to the basis $\{ \mathcal{U}_1-\mathcal{V}_1,\dots,  \mathcal{U}_g-\mathcal{V}_g\}$ of $H_1(G_g, \mathbb{Z})$ is a diagonal matrix with the lengths of the loops in the diagonal.


\subsection{Linear Equivalence: Chain of Loops vs One Loop}\label{lineq_app}

Given two divisors both supported in any one loop in a chain of loops. The following proposition relates linear equivalence between them treated as divisors on that loop with linear equivalence between them treated as divisors on the chain of loops.
This comparison is relevant in an algorithm to reduce a divisor on a chain of loops in its linear equivalence class,  we refer to Section \ref{LangBNCol_sect} for more details.

\begin{proposition}\label{lineqcol_lem}
Let $\Gamma_g$ be a chain of $g$ loops for an integer $g \geq 1$.  Fix an integer $j$ between one and $g$. Divisors $D_1$ and $D_2$ both supported on the loop $L_j$ are linearly equivalent as divisors on $L_j$ if and only they are linearly equivalent as divisors on $\Gamma_g$. 
\end{proposition}

\begin{proof} 
($\Rightarrow$) Since $D_1$ and $D_2$ are linearly equivalent as divisors on $L_j$, there is a rational function $f_{L_j}$ on $L_j$ whose principal divisor is $D_1-D_2$. We can extend $f_{L_j}$ to a function $f_{\Gamma_g}$ on $\Gamma_g$ as follows:

$f_{\Gamma_g}(p)=\\
\begin{cases}
f_{L_j}(p),\text{ if }p \in L_j,\\
f_{L_j}(v_j),\text{ if } p \in L_i \text{ for } i<j \text{ or on the segment joining $v_i$ and $w_{i-1}$ for $i \leq j$},\\
f_{L_j}(w_j),\text{ if } p \in L_i \text{ for } i>j \text{ or on the segment joining $v_{i}$ and $w_{i-1}$ for $i>j$},

 \end{cases}
$

By construction, $f_{\Gamma_g}$ is a rational function on $\Gamma_g$ and the principal divisor associated to it is precisely $D_1-D_2$ (as a divisor on $\Gamma_g$).

($\Leftarrow$) Suppose that $D_1$ and $D_2$ are linearly equivalent as divisors on $\Gamma_g$ and let $f_{\Gamma_g}$ be the rational function whose associated principal divisor is $D_1-D_2$. We claim that $f_{\Gamma_g}|L_j$, i.e. $f_{\Gamma_g}$ restricted to $L_j$, is a rational function on $L_j$ whose principal divisor is $D_1-D_2$. Indeed,  $f_{\Gamma_g}|L_j$ is a rational function on $L_j$. In order to show that its principal divisor is $D_1-D_2$, we need to show that $f_{\Gamma_g}$ is locally constant at $v_j$ and $w_j$ along the tangent directions corresponding to the bridges $(v_j,w_{j-1})$ and $(v_{j+1},w_j)$, respectively (whenever they exist).  

To see this,  consider the restriction of $f_{\Gamma_g}$ to the (sub-)metric graph $\mathcal{C} \cup \{v_j\}$ where $\mathcal{C}$ is the connected component of $\Gamma_g \setminus \{v_j\}$ that contains $w_{j-1}$. This restriction $f_{\Gamma_g}|(\mathcal{C} \cup \{v_j\})$ is a rational function on $\mathcal{C} \cup \{v_j\}$. The principal divisor associated to it has degree zero and cannot have any point (in $\mathcal{C} \cup \{v_j\}$) other than $v_j$ in its support (since this property holds for $f_{\Gamma_g}$). Hence, this is the divisor zero. Hence, $f_{\Gamma_g}|(\mathcal{C} \cup \{v_j\})$ and $f_{\Gamma_g}$ are locally constant along the tangent direction corresponding to  $(v_j,w_{j-1})$. Analogously, the fact that $f_{\Gamma_g}$ is locally constant along the tangent direction corresponding to  $(v_{j+1},w_j)$ follows by considering the restriction of $f_{\Gamma_g}$ on $\mathcal{C}' \cup \{w_j\}$ where $\mathcal{C}'$ is the connected component of $\Gamma_g \setminus \{w_j\}$ containing $v_{j+1}$.
\end{proof}

We refer to \cite[Lemma 3.13]{Pfl17} that is closely related to Proposition \ref{lineqcol_lem}.

\section{Poincar\'e Series of Divisors on a Finite Graph}\label{poingra_sect}
We start this section with a proof of rationality of Poincar\'e series of divisors on a finite graph and then discuss its algorithmic aspects. 
We will use the convention that if a series is indexed over an empty set, then it is zero. 

\subsection{Proof of Theorem \ref{grapoinrat_theo}}

Let $d_i={\rm deg}(D_i)$ where ${\rm deg}(.)$ is the degree of the divisor.  We decompose the Poincar\'e series based on the degree of $\sum_{i=1}^{k}n_iD_i$ as follows. 
Let $Q^{(l)}_{G}=\{ (n_1,\dots,n_k)  \in \mathbb{N}^k|~\sum_{i=1}^{k} n_id_i=l\}$, we define

\begin{center} 
$P^{(l)}_{G}(z_1,\dots,z_k)=\sum_{(n_1,\dots,n_k )\in Q^{(l)}_{G}}(r_G(n_1D_1+\dots+n_kD_k)+1)z_1^{n_1} \cdots z_k^{n_k}$.
\end{center} 

Note that the degree of $\sum_{i=1}^{k}n_iD_i$ is $\sum_{i=1}^{k}n_id_i$. 
By construction, $P_{G}=\sum_{l \in \mathbb{Z}} P^{(l)}_{G}$.  Note that $P^{(l)}_{G}=0$ for $l<0$ since the rank of a divisor of negative degree is $-1$.  Furthermore, if $\sum_{i=1}^{k}n_id_i >2g-2$, then by the Riemann-Roch theorem for graphs  \cite[Theorem 1.12]{BakNor07}  $r_G(n_1D_1+\dots+n_kD_k)=\sum_{i=1}^{k} n_i d_i-g$.  Hence, 
\begin{center}
$\sum _{l>2g-2}P^{(l)}_{G}=\sum_{(n_1,\dots,n_k) \in \mathbb{N}^k,~\sum_{i=1}^{k}n_id_i \geq 2g-1} (\sum_{i=1}^{k}n_id_i-g+1) z_1^{n_1}\cdots z_k^{n_k}$.
\end{center}

The rationality of this power series follows from the rationality of lattice point enumerating functions of rational polyhedra. We provide a more explicit description of this rational function in Subsection \ref{poinlarge_subsect}.   

Next, we consider $P^{(l)}_{G}$ for $l$ from $0$ to $2g-2$. We further decompose $P^{(l)}_{G}$ in terms of its divisor classes. For a divisor class $[D] \in {\rm Div}(G)/{\rm Prin}(G)$,  let 
$Q^{[D]}_{G}=\{ (n_1,\dots,n_k)  \in \mathbb{N}^k|~\sum_{i=1}^{k} n_iD_i  \in [D]\}$ and define

\begin{center} $P^{[D]}_{G}(z_1,\dots,z_k)=(r_G(D)+1)\sum_{(n_1,\dots,n_k) \in Q^{[D]}_{G}}z_1^{n_1}\cdots z_k^{n_k}$. \end{center}  
 
Note that the rank $r_G(D)$ does not depend on the choice of representative in the linear equivalence class $[D]$.   Let ${\rm Jac}^{(l)}(G)$ be the set of all linear equivalence classes of divisors of degree $l$, we have \begin{equation}\label{poindec_eq} P^{(l)}_{G}=\sum_{[D] \in {\rm Jac}^{(l)}(G)} P^{[D]}_{G}.\end{equation}

Note that since ${\rm Jac}^{(l)}(G)$ is a finite set (of cardinality equal to the number of spanning trees of $G$), the sum on the right hand side of Equation (\ref{poindec_eq}) is a finite sum. Hence, it suffices to show that each $P^{[D]}_{G}$ is rational.  


For this, consider the generating function $f(Q^{[D]}_{G};z_1,\dots,z_k)$ defined as \begin{center}$f(Q^{[D]}_{G};z_1,\dots,z_k)=\sum_{(n_1,\dots,n_k) \in Q^{[D]}_{G}} z_1^{n_1}\cdots z_k^{n_k}$. \end{center}  Note that $P^{[D]}_{G}(z_1,\dots,z_k)=(r_G(D)+1)f(Q^{[D]}_{G}; z_1,\dots,z_k)$.  

Next, we study the set $Q^{[D]}_{G}$ in more detail.  Consider the group homomorphism $\phi_{G}: \mathbb{Z}^k \rightarrow  {\rm Div}(G)/{\rm Prin}(G)$ defined as $(n_1,\dots,n_k) \rightarrow [\sum_{i=1}^{k}n_i D_i]$.  The set $Q^{[D]}_{G}$ is then the set of points of the fibre of $\phi_{G}$ over $[D]$ that lie in the non-negative orthant cone.  Since  $\phi_{G}$ is a group homomorphism, the non-empty fibres are cosets of its kernel.  Furthermore, the kernel of $\phi_{G}$ is a sublattice of $\mathbb{Z}^k$ (see Subsection \ref{fibrecom_subsect} for more details). Hence, each non-empty fibre $F_{[D]}$ of $\phi_{G}$ over $[D] \in  {\rm Div}(G)/{\rm Prin}(G)$ is an affine lattice of the form ${\bf a}+{\rm ker}(\phi_{G})$ where ${\bf a} \in \mathbb{Z}^k$  and ${\rm ker}(\phi_{G})$ is the kernel of $\phi_{G}$. The set $Q^{[D]}_{G}$ is the set of points in $F_{[D]}$ that lie in the non-negative orthant cone.   The rationality of $f(Q^{[D]}_{G}; z_1,\dots,z_k)$ follows from \cite[Corollary 7.6]{CutHerReg03}. This completes the proof of rationality of $P_G$. The property that this rational function agrees with the corresponding power series at every point where the power series is absolutely convergent follows from the corresponding property for each lattice point enumerating function in the sum. 
\qed

\subsection{Algorithmic Aspects}\label{poingraphexp_sect}

In this subsection, we describe the rational function associated to the Poincar\'e series of divisors on a finite graph more explicitly with the following two goals in mind: i. to obtain an effective method to construct the rational function given the graph and the divisors, ii. to extract information about the underlying graph from the Poincar\'e series associated to divisors on it.   We start with the summand  $\sum_{l>2g-2} P^{(l)}_{G}$.

\subsubsection{An Explicit Description of $\sum_{l>2g-2} P^{(l)}_{G}$}\label{poinlarge_subsect}

Recall from the last section that $\sum_{l>2g-2} P^{(l)}_{G}(z_1,\dots,z_k)$ is (as a formal power series) equal to:

\begin{center}

 $\sum_{(n_1,\dots,n_k) \in \mathbb{N}^k,~\sum_{i=1}^{k}n_id_i \geq 2g-1} (\sum_{i=1}^{k}n_id_i-g+1) z_1^{n_1}\cdots z_k^{n_k}$.
 
 \end{center}
 
 Consider the lattice point enumerating function $f(Q;z_1,\dots,z_k)$ of the rational polyhedron $Q$ obtained by intersecting the non-negative orthant cone and the half-space $\sum_{i=1}^{k}n_id_i \geq 2g-1$, i.e.
 
 \begin{center}
 $Q=\{(n_1,\dots,n_k) \in \mathbb{R}^k|~n_i \geq 0 \text{ for all }  i, \sum_{i=1}^{k}n_id_i \geq 2g-1\}$.
\end{center}

We can express $\sum_{l>2g-2} P^{(l)}_{G}$ in terms of $f(Q;z_1,\dots,z_k)$ as:

\begin{center}

$(\sum_{i=1}^{k} d_i\partial_{z_i} -(g-1))f(Q;z_1,\dots,z_k)$
\end{center}

where $\partial_{z_i}$ is the partial derivative operator with respect to $z_i$. To compute $f(Q;z_1,\dots,z_k)$, we use Brion's formula \cite{Bri88},\cite[Theorem 3.5]{Bar99}: 

\begin{theorem}
Let $R$ be a rational polyhedron with vertex set $V(R)$. Let ${\rm cone}(v)$ be the tangent cone of the vertex $v$. The lattice point enumerating function of $R$ is given by the formula: 
\begin{center} 
$f(R;z_1,\dots,z_k)=\sum_{v \in V(R)} f({\rm cone}(v);z_1,\dots,z_k)$
\end{center}

where $f({\rm cone}(v);z_1,\dots,z_k)$ is the lattice point enumerating function of ${\rm cone}(v)$.

\end{theorem}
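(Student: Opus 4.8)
The plan is to prove Brion's formula by combining the Brianchon–Gram decomposition of the indicator function of $R$ with the vanishing of the generating function of any polyhedron that contains a rational line. First I would fix the correct algebraic framework. For a rational polyhedron $P \subseteq \mathbb{R}^k$ the series $\sum_{m \in P \cap \mathbb{Z}^k} z_1^{m_1}\cdots z_k^{m_k}$ converges on a nonempty open subset of $\mathbb{C}^k$ and there represents a rational function; for a rational simplicial cone this is the explicit product formula, for an arbitrary rational cone it follows by triangulation, and a general polyhedron is reduced to the conical case. I would then record two structural facts, due to Lawrence and to Khovanskii--Pukhlikov and developed in \cite{Bar99}: (i) the assignment $[P] \mapsto f(P;z_1,\dots,z_k)$, sending the indicator function of $P$ to this rational function, extends to a \emph{valuation}, i.e.\ a linear map from the $\mathbb{C}$-vector space spanned by indicator functions of rational polyhedra to the field $\mathbb{C}(z_1,\dots,z_k)$; and (ii) this valuation is \emph{translation-covariant}, $f(P+v;z)=z^{v}\,f(P;z)$ for every $v \in \mathbb{Z}^k$.

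The key lemma I would isolate is that $f(P;z)=0$ whenever $P$ contains a line with rational direction. Since $P$ is rational its lineality space is spanned by rational vectors, so I may choose a primitive lattice vector $\ell \in \mathbb{Z}^k$ in that direction; then $P$ is invariant under translation by $\ell$, whence $[P+\ell]=[P]$ as indicator functions. Applying the valuation together with translation-covariance gives $f(P;z)=f(P+\ell;z)=z^{\ell}f(P;z)$, so $(1-z^{\ell})f(P;z)=0$ in $\mathbb{C}(z_1,\dots,z_k)$; as $1-z^{\ell}$ is a nonzero field element, $f(P;z)=0$. This is the whole content of ``lines contribute nothing.''

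With these in hand the conclusion is short. I would invoke the Brianchon--Gram--Sommerville identity for the pointed rational polyhedron $R$,
\[
[R] \;=\; \sum_{\emptyset \neq F \preceq R} (-1)^{\dim F}\,[{\rm tcone}(F)],
\]
where $F$ ranges over the nonempty faces of $R$ and ${\rm tcone}(F)$ is the tangent (supporting) cone along $F$. The crucial observation is that whenever $\dim F \geq 1$ the tangent cone ${\rm tcone}(F)$ contains the full affine-hull direction of $F$, hence a rational line, so its contribution vanishes by the key lemma; this holds uniformly for bounded and unbounded faces, since the tangent cone along an unbounded edge or facet still contains the line spanned by the affine hull of that face. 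Applying $f(\,\cdot\,;z)$ to the identity and discarding all terms with $\dim F \geq 1$ leaves exactly the vertices ($\dim F=0$), for which ${\rm tcone}(v)={\rm cone}(v)$, yielding $f(R;z)=\sum_{v\in V(R)}f({\rm cone}(v);z)$. If $R$ is not pointed it contains a line, both sides are $0$ by the key lemma, and there is nothing to prove.

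I expect the genuinely substantive ingredient to be the valuation property (i), not the combinatorial identity or the key lemma, both of which are a few lines. The difficulty is that the defining series for the various $f(P;z)$ converge on different, possibly disjoint, regions, so one must verify that the scissor relations among indicator functions are respected by the \emph{meromorphic continuations} rather than merely by the convergent series; this is precisely the Lawrence--Khovanskii--Pukhlikov theorem, which I would cite from \cite{Bar99} (and compare with \cite{Bri88}) rather than reprove. The only other point requiring care is to use Brianchon--Gram in the form valid for pointed polyhedra, so that the unbounded faces of $R$ are handled on the same footing as the bounded ones; the line-vanishing step then closes the argument without a separate capping or limiting construction.
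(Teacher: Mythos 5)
The paper does not prove this statement: it is Brion's theorem, quoted with citations to \cite{Bri88} and \cite[Theorem 3.5]{Bar99} and used as a black box to compute $f(Q;z_1,\dots,z_k)$. Your argument is, in substance, the standard proof from the cited survey of Barvinok and Pommersheim: the Lawrence--Khovanskii--Pukhlikov valuation extending $[P]\mapsto f(P;z)$, the vanishing of $f(P;z)$ for polyhedra containing a rational line via $(1-z^{\ell})f(P;z)=0$, and the decomposition of $[R]$ into tangent cones of faces. So there is no internal proof to compare against; your proposal correctly reconstructs the external one in outline, and you correctly identify the valuation property as the substantive imported ingredient.

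One step, however, asserts an identity that is literally false and must be restated as a congruence. The exact Brianchon--Gram identity $[R]=\sum_{\emptyset\neq F\preceq R}(-1)^{\dim F}[{\rm tcone}(F)]$ holds for polytopes but fails for unbounded polyhedra: for $R=[0,\infty)\subseteq\mathbb{R}$ the right-hand side is $[0,\infty)-[\mathbb{R}]$, which disagrees with $[R]$ at every negative point. The correct statement in the unbounded case, and the one actually used in \cite{Bar99}, is that $[R]\equiv\sum_{v\in V(R)}[{\rm cone}(v)]$ modulo the subspace spanned by indicator functions of polyhedra containing a line. Since your key lemma shows precisely that this subspace lies in the kernel of the valuation, the conclusion $f(R;z)=\sum_{v\in V(R)}f({\rm cone}(v);z)$ is unaffected; but the sentence claiming the identity ``holds uniformly for bounded and unbounded faces'' should be replaced by the congruence (or by a separate citation of the mod-lines decomposition), as otherwise the displayed equation of indicator functions is wrong for any unbounded $R$, including the polyhedron $Q$ to which the paper actually applies the theorem.
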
 

In the following proposition, we describe the set of vertices of $Q$ and their respective tangent cones.  For an integer $1 \leq  i \leq k$ such that $d_i \neq 0$, we define the point $v_i \in \mathbb{R}^k$ as follows:

\begin{center}
$(v_i)_j=
\begin{cases}
(2g-1)/d_i,~\text{if } j=i,\\
0,~\text{otherwise}. \\
\end{cases}
$

\end{center}
where $(v_i)_j$ is the $j$-th coordinate of $v_i$.   For $i \neq j$, let $R_{i,j}$ be the ray spanned by the vector $v_j-v_i$.

\begin{proposition}
If $g=0$, the vertices of $Q$ are precisely the origin and the points $v_i$ for which $d_i<0$. If $g \geq 1$, the set $Q$ is empty if all $d_i \leq 0$ and otherwise, its vertices are precisely the points $v_i$ for which $d_i>0$.  

The extremal rays of the tangent cone of the origin (in the case $g=0$) are precisely the standard basis vectors $e_1,\dots,e_k$.  One extremal ray of the tangent cone of the vertex $v_i$ is $-e_i$, if $g=0$ and $e_i$, otherwise.  In both the cases, the other extremal rays of the tangent cone of the vertex $v_i$ are $e_j$ such that $j \neq i$ and $d_j=0$, and $R_{i,j}$ for $j$ between $1$ and $k$ such that $d_j \neq 0$ and $j \neq i$.  
\end{proposition}
\begin{proof}
The polyhedron $Q$ can be expressed as the feasible set of the following $k+1$ linear inequalities: 
\begin{center}
$x_i \geq 0$, for all integers $i \in [1,k]$,\\
$\sum_{i=1}^k d_ix_i \geq 2g-1$.
\end{center}
Vertices of $Q$ are precisely those points that attain an equality at $k$ linearly independent constraints (i.e. the corresponding linear system obtained by replacing the inequalities by equalities has full rank) and satisfy the other inequality. The description of the vertices follows immediately from this property. 

The tangent cone of a vertex $v$ is defined by precisely the $k$ constraints that are active at $v$ (since $g$ is an integer, not all the $k+1$ constraints can be active at $v$). Its extremal rays are defined by the equalities corresponding to any $k-1$ of these constraints and the inequality corresponding to the other one. 
The statement on the extremal rays of the tangent cones follows immediately from this observation. 
\end{proof}
\begin{remark}
\rm Note that $Q$ can either be empty (if $d_i<0$ for all integers $i \in [1,k]$ and $g>0$), a bounded polyhedron, i.e. a polytope (if $d_i<0$ for all integers $i \in [1,k]$ and $g=0$) or an unbounded polyhedron (if  $d_i>0$ for all integers $i \in [1,k]$). \qed
\end{remark}

As described in \cite[Proof of Theorem 3.1]{Bar99}, we compute $f({\rm cone}(v);z_1,\dots,z_k)$ as follows. Consider the polyhedron $({\rm cone}(v),1)$ in $\mathbb{R}^{k+1}$ and take the closure $K$ of its conic hull. This is a rational cone (spanned by the generators of ${\rm cone}(v)$ and $e_{k+1}$). The lattice point enumerating function $f(K;z_1,\dots,z_k,t)$ of $K$ can be computed as in \cite[Example 3.3]{Bar99}. We recover $f({\rm cone}(v);z_1,\dots,z_k)$ as $\partial_t f(K;z_1,\dots,z_k,t)|_{t=0}$.
 

\subsubsection{Computing the Fibres of $\phi_{G}$}\label{fibrecom_subsect}

In the following, we compute the fibre of  $\phi_{G}$ over a divisor class $[D]$ of ${\rm Div}(G)/{\rm Prin}(G)$ as a affine sublattice of $\mathbb{Z}^k$, in terms of a translate and a generating set of the underlying sublattice. 
We start by describing the kernel ${\rm ker}(\phi_{G})$ of the homomorphism $\phi_{G}:\mathbb{Z}^k \rightarrow  {\rm Div}(G)/{\rm Prin}(G)$.  We have:
\begin{center} ${\rm ker}(\phi_{G})=\{ (n_1,\dots,n_k) \in \mathbb{Z}^k|~\sum_{i=1}^{k}n_i[D_i] \in {\rm Prin}(G)\}$. \end{center}

By identifying ${\rm Div}(G)$ with the integer lattice $\mathbb{Z}^N$ where $N$ is the number of vertices of the graph,  the group ${\rm Prin}(G)$ of principal divisors on $G$ can be realised a sublattice of $\mathbb{Z}^{N}$ called \emph {the Laplacian lattice $L_G$ of $G$} (the lattice generated by the rows of the Laplacian matrix of $G$) \cite{AmiMan10}. Hence, the problem of computing ${\rm ker}(\phi_{G})$ reduces to

\begin{center} ${\rm ker}(\phi_{G})=\{ (n_1,\dots,n_k) \in \mathbb{Z}^k|~\sum_{i=1}^{k}n_iD_i \in L_G\}$. \end{center}

A finite generating set of ${\rm ker}(\phi_{G})$ can be computed as follows. Compute a basis $\{\mathcal{B}_1,\dots,\mathcal{B}_k\}$ for the lattice $\{(n_1,\dots,n_k) \in \mathbb{Z}^k|~\sum_{i=1}^{k} d_in_i=0 \}$ \footnote{a basis can be computed by first computing a $\mathbb{Q}$-vector space basis $\{\tilde{\mathcal{B}}_1,\dots,\tilde{\mathcal{B}}_k\}$ for $\{(n_1,\dots,n_k) \in \mathbb{Q}^k|~\sum_{i=1}^{k} d_in_i=0 \}$ and enumerating all lattice points in the parallelopiped defined $\chi_1 \tilde{\mathcal{B}_1},\dots, \chi_k\tilde{\mathcal{B}_k}$ where $\chi_i$ is the minimum non-negative integer such that $\chi_i\tilde{\mathcal{B}}_i$ is in the lattice.}.  For each basis element $\mathcal{B}_i$, compute the smallest non-negative integer $\lambda_i$ such $\lambda_i \cdot \mathcal{B}_i \in L_G$.  Note that $\lambda_i$ is also the order of   $\mathcal{B}_i$, seen as a divisor class in the Jacobian group of $G$.  Suppose $\mathcal{P}$ is the convex hull of $\{ \lambda_1  \cdot\mathcal{B}_1.\dots.\lambda_k  \cdot \mathcal{B}_k \}$. The set $\mathcal{P} \cap L_G$ is a finite generating set of ${\rm ker}(\phi_{G})$. This set can be computed using standard lattice point enumeration algorithms \cite[Section 4]{Bar99}. 

 More generally, the fibre $\phi_{G}$ over a divisor class $[D]$ can be computed as follows. Let $\hat{\phi}_{G}:\mathbb{Z}^{k+1} \rightarrow {\rm Div}(G)/{\rm Prin}(G)$ be the homomorphism given by $\hat{\phi}_{G}((n_1,\dots,n_{k+1}))=\sum_{i=1}^{k}n_i[D_i]+n_{k+1}[D]$. Compute a finite generating set $\hat{\mathcal{B}_1},\dots,\hat{\mathcal{B}}_{k+1}$ for the kernel of $\hat{\phi}_{G}$. The fibre  of $\hat{\phi}_{G}$ over $[D]$ is non-empty if and only if the set $\{(\hat{\mathcal{B}}_1)_{k+1},\dots,(\hat{\mathcal{B}}_{k+1})_{k+1}\}$ of the $(k+1)$-st coordinates of the generators is relatively prime. Suppose that this fibre is non-empty, then there exist integers $c_1,\dots,c_{k+1}$ such that $\sum_{i=1}^{k+1}c_i(\hat{\mathcal{B}}_i)_{k+1}=1$. The fibre of $\hat{\phi}_{G}$  over $[D]$ is $\sum_{i=1}^{k+1}c_i \cdot \hat{\mathcal{B}}_i+{\rm ker}(\hat{\phi}_{G})$. 

\subsubsection{The Algorithm}

We present an algorithm to compute the Poincar\'e series of divisors on a graph $G$ given the Laplacian matrix of $G$ and divisors $D_1,\dots,D_k$. The key ingredients of the algorithm are: 
\begin{enumerate}
\item Computing the fibres of $\phi_{G}$, in particular its kernel, as described in Subsection \ref{fibrecom_subsect}. 
\item Enumerating the divisor classes in the Jacobian of $G$. This can be carried out, for instance, by enumerating all $v$-reduced divisors of degree zero for any fixed vertex $v$ of $G$. By the definition \cite[Section 3.1]{BakNor07}) of reduced divisors, this problem can be formulated as a lattice point enumeration problem. An alternative method is to compute the numerator ($K$-polynomial) of the $\mathbb{N}^{n}$-graded Hilbert series of the $G$-parking function ideal of the graph \cite{PosSha04}. 
\item Computing the lattice point enumerating function of the non-negative orthant cone with respect to an affine sublattice of $\mathbb{Z}^k$. This can be carried out by a suitable modification of lattice point enumeration with respect to $\mathbb{Z}^k$, see \cite[Page 113]{Bar99}, \cite[Theorem 7.5]{CutHerReg03} for more details. 
\item Computing the rank of a divisor on a graph, see  \cite[Chapter 5]{Man11}, \cite{CorLeb16} for more details. 
\item Computing the lattice point enumerating function of the polyhedron $Q$ (of Subsection \ref{poinlarge_subsect}) with respect to $\mathbb{Z}^k$.
\end{enumerate}



\begin{algorithm}\label{Poingra_algo}
\caption{An Algorithm to Compute the Poincar\'e Series of Divisors on a Graph.}
\begin{algorithmic}
\State {\bf Input:} The Laplacian matrix of $G$ and a finite collection of divisors $D_1,\dots,D_k$ on $G$. 
\State Compute the genus $g$ of $G$.
\ForEach{$i$ from one to $k$}
\State Compute the degree $d_i$ of $D_i$.
\EndFor
\State Set $L_G$ to be the Laplacian lattice of $G$. 
\State Set $\Lambda:=\{(n_1,\dots,n_k) \in \mathbb{Z}^k|~\sum_{i=1}^{k}n_iD_i\in L_G\}$.
\State Set $P:=0$.
\State Enumerate all the divisor classes in the Jacobian ${\rm Jac}(G)$.
\State Fix a vertex $v$ of $G$.
\ForEach{$[D] \in {\rm Jac}(G)$ and integer $d \in [0,\dots,2g-2]$}
\If{$\sum_{i=1}^{k}n_i[D_i]=[D]+d \cdot [(v)]$ has an integral solution}
 \State Compute such an integral solution ${\bf s}$.
 \State Compute the lattice point enumerating function $f$ of $\mathbb{R}^{k}_{\geq 0}$ with respect to the affine lattice ${\bf s}+\Lambda$.
 \State Compute the rank $r(D+d \cdot (v))$ of $D$.
 \State Compute $P:=P+r(D+d \cdot (v)) \cdot f$.
 \EndIf
\EndFor

\State  Compute the lattice point enumerating function  $f_Q$ (with respect to $\mathbb{Z}^k$) of $Q=\{(n_1,\dots,n_k) \in \mathbb{R}^k|~n_i \geq 0 \text{ for all }  i, \sum_{i=1}^{k}n_id_i \geq 2g-1\}$.
\State Add $(\sum_{i=1}^{k} d_i\partial_{z_i} -(g-1))f_Q$ to $P$.
\State {\bf Output:} $P$.


\end{algorithmic}
\end{algorithm}

\subsubsection{An Example}

In the following, we will describe the case where $k=N-1$ where $N$ is the number of vertices of $G$ and $D_1,\dots,D_{N-1}$ are a (standard) basis of the root lattice $A_{N-1}(:=(1,\dots,1)^{\perp} \cap \mathbb{Z}^{N})$. 

We denote the vertices of $G$ by $v_1,\dots,v_N$ and let $D_i=(v_i)-(v_N)$ for integers $i$ from $1$ to $N-1$. Note that $\{D_1,\dots,D_{N-1}\}$ is a basis of the root lattice $A_{N-1}$ in $\mathbb{Z}^{N}$ (here ${\rm Div}(G)$ has been identified with $\mathbb{Z}^{N}$ by identifying $(v_i)$ with the standard basis element $e_i$ in $\mathbb{Z}^N$). 
 In this case, ${\rm ker}(\phi_{G})$ can be described more explicitly as follows:
 
 \begin{proposition}\label{kerex_prop}
 The kernel ${\rm ker}(\phi_{G})$ of $\phi_{G}$ is the sublattice of $\mathbb{Z}^{N-1}$ generated by $b_1|_{N-1},\dots,b_{N-1}|_{N-1}$ where $b_i \in \mathbb{Z}^{N}$ is the $i$th row of the Laplacian matrix of $G$ and $b_i|_{j}$ is its restriction to its first $j$ coordinates.  The index $[\mathbb{Z}^{N-1}:{\rm ker}(\phi_{G}]$ of ${\rm ker}(\phi_{G})$ in $\mathbb{Z}^{N-1}$ is equal to the number of spanning trees of $G$. 
\end{proposition}

\begin{proof}
Since $\{D_1,\dots,D_{N-1}\}$ is a basis of $A_{N-1}$ and $L_G \subseteq A_{N-1}$ every element in $L_G$ can be written uniquely as their integer linear combination. The first $N-1$ rows $b_1,\dots,b_{N-1}$ of the Laplacian matrix of $G$ form a basis of $L_G$. They can be expressed as an integer linear combination of $D_1,\dots,D_{N-1}$ as $b_i=\sum_{j=1}^{N-1} (b_i|_{N-1})_j D_{j}$ where $(b_i|_{N-1})_j$ is the $j$-th coordinate of $b_i|_{N-1}$. The first part of the proposition follows from this statement.  This combined with the matrix tree theorem yields the second statement.

\end{proof}

Using Proposition \ref{kerex_prop}, we compute the Poincar\'e series of $D_1,\dots,D_{N-1}$ on $G$. Note that since the degree of every divisor is zero and every divisor of degree zero that is not principal has rank minus one, $P_{G}(z_1,\dots,z_{N-1})=P^{[O]}_{G}(z_1,\dots,z_{N-1})$ where $[O]$ is the identity of the Jacobian of $G$.  Furthermore,  $P^{[O]}_{G}(z_1,\dots,z_{N-1})$ is the lattice point enumerating function of the non-negative orthant cone in $\mathbb{R}^{N-1}$ with respect to the lattice ${\rm ker}(\phi_{G})$. This function can be computed using the following description of the lattice point enumerating function of a rational simplicial cone \cite[Example 3.3]{Bar99}.

Suppose that $C$ is a rational simplicial cone with respect to the lattice $L$, i.e. there is a linearly independent generating set of $C$ consisting only of primitive points in $L$.  Assume, without loss of generality, that the dimension of $C$ is equal to the rank of $L$ denoted by $d$, say. 
Suppose that $\{{\bf g_1},\dots,{\bf g_ d}\}$ be such a generating set.  Let $F=\{\sum_{j=1}^{d}\alpha_j{\bf g_j}|~\alpha_j \in [0,1),~{\rm for~all}~\alpha_j \}$ be the (semi-open) fundamental parallelepiped spanned by this generating set. Note that the generators $\{{\bf g_1},\dots,{\bf g_ d}\}$ span a sublattice of $L$ of finite index $q$, say. Hence, $|F \cap L|=q$ and 
 let $\{{\bf r_1},\dots,{\bf r_q}\}$ be the set of points in $F \cap L$.

 \begin{proposition}\label{ratconegen_prop}
 The lattice point enumerating function $\sum_{{\bf p} \in C \cap L} {\bf z}^{\bf p}$ of $C$ with respect to $L$ is given by 
 \begin{center}
 $\sum_{i=1}^{q} {\bf z^{\bf r_i}}/((1-{\bf z^{g_1}})\cdots(1-{\bf z^{g_d}}))$.
 \end{center}
 \end{proposition}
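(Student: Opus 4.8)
The plan is to establish that the cone $C$ is tiled by the lattice translates of the fundamental domain $F$, which produces a bijective decomposition of $C \cap L$ and collapses the generating function into a product of geometric series. Concretely, I would first prove that every lattice point $\mathbf{p} \in C \cap L$ admits a \emph{unique} representation
\[
\mathbf{p} = \mathbf{r}_i + \sum_{j=1}^d c_j \mathbf{g}_j, \qquad \mathbf{r}_i \in F \cap L, \quad (c_1,\dots,c_d) \in \mathbb{Z}_{\geq 0}^d .
\]
For existence: since the $\mathbf{g}_j$ are linearly independent and $\dim C = d = \mathrm{rank}(L)$, each $\mathbf{p} \in C$ has a unique expression $\mathbf{p} = \sum_j \beta_j \mathbf{g}_j$ with real $\beta_j \geq 0$. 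Splitting $\beta_j = \lfloor \beta_j \rfloor + \{\beta_j\}$ and setting $c_j = \lfloor \beta_j \rfloor \in \mathbb{Z}_{\geq 0}$ gives $\mathbf{p} - \sum_j c_j \mathbf{g}_j = \sum_j \{\beta_j\}\mathbf{g}_j \in F$; when $\mathbf{p} \in L$ this difference is a difference of lattice points, hence lies in $L$, so it equals one of the $\mathbf{r}_i$.

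For uniqueness I would use that linear independence of the $\mathbf{g}_j$ forces the real coefficients in any two such representations to coincide; writing each $\mathbf{r}_i = \sum_j \alpha_j \mathbf{g}_j$ with $\alpha_j \in [0,1)$, the coefficients of $\mathbf{p}$ are $\alpha_j + c_j$, and since the $\alpha_j$ are the fractional parts and the $c_j$ the integer parts, both are determined by $\mathbf{p}$. The reverse assignment is clearly well defined, because $\mathbf{r}_i + \sum_j c_j \mathbf{g}_j = \sum_j (\alpha_j + c_j)\mathbf{g}_j$ has nonnegative coefficients and is a sum of lattice points, hence lies in $C \cap L$. Thus $(\mathbf{r}_i,(c_j)) \mapsto \mathbf{r}_i + \sum_j c_j \mathbf{g}_j$ is a bijection from $(F \cap L) \times \mathbb{Z}_{\geq 0}^d$ onto $C \cap L$.

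Granting the bijection, I would substitute it into the defining sum and factor over the independent summation indices:
\[
\sum_{\mathbf{p} \in C \cap L} \mathbf{z}^{\mathbf{p}}
= \sum_{i=1}^q \mathbf{z}^{\mathbf{r}_i} \prod_{j=1}^d \sum_{c_j \geq 0} \bigl(\mathbf{z}^{\mathbf{g}_j}\bigr)^{c_j}
= \sum_{i=1}^q \frac{\mathbf{z}^{\mathbf{r}_i}}{\prod_{j=1}^d \bigl(1 - \mathbf{z}^{\mathbf{g}_j}\bigr)},
\]
which is exactly the claimed formula (and the $\mathbf{r}_i$ range over a set of size $q = |F \cap L|$ by construction).

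The genuine content is not the combinatorial bijection, which is routine, but justifying the interchange of the $d$-fold sum with the product of geometric series. I would handle this by working in the domain where $|\mathbf{z}^{\mathbf{g}_j}| < 1$ for all $j$: there the iterated sum over $\mathbb{Z}_{\geq 0}^d$ converges absolutely, so Fubini legitimizes the factorization, the geometric series sum to $1/(1-\mathbf{z}^{\mathbf{g}_j})$, and the resulting rational function agrees with the power series on that region. This is precisely the convergence caveat appearing in Theorems \ref{grapoinrat_theo} and \ref{poinratcol_theo}, so the identity is stated as one between the power series and the rational function wherever the former converges absolutely.
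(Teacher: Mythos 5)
Your argument is correct and is precisely the standard tiling proof: the paper itself gives no proof of Proposition \ref{ratconegen_prop}, citing \cite[Example 3.3]{Bar99}, and the argument there is exactly your unique decomposition $\mathbf{p} = \mathbf{r}_i + \sum_j c_j \mathbf{g}_j$ followed by factoring into geometric series. Your attention to the integer/fractional-part uniqueness and to the absolute-convergence domain where the rearrangement is justified matches the convergence caveat the paper attaches to its rationality statements, so nothing is missing.
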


The generators ${\bf g_1},\dots,{\bf g_{N-1}}$ of the non-negative orthant cone (as in Proposition \ref{ratconegen_prop}) are $\mu_i e_i$ where $e_i$ is the standard basis element of $\mathbb{Z}^{N-1}$ and $\mu_i$ is the order of the element $[D_i]$ in the Jacobian of $G$. The lattice spanned by  ${\bf g_1},\dots,{\bf g_{N-1}}$ is a sublattice of ${\rm ker}(\phi_{G})$ of index $(\prod_{i=1}^{N-1} \mu_i)/N_G$ where $N_G$ is the number of spanning trees of $G$ (note that this also implies that $N_G$ divides $\prod_{i=1}^{N-1} \mu_i$).   As a corollary to Proposition \ref{ratconegen_prop} we have:

\begin{corollary}
The Poincar\'e series $P_{G}(z_1,\dots,z_{N-1})$ is given by the rational function: 
\begin{center}
$(\sum_{{\bf r } \in B}{\bf z^r})/(\prod_{i=1}^{N-1} (1-z_i^{\mu_i}))$
\end{center}
where $B=\{{\bf r} \in {\rm ker}(\phi_{G})|$~the $i$-th coordinate $r_i$  of ${\bf r}$ satisfies $0 \leq r_i <\mu_i$ for each $i$ from $1$ to $N-1\}$. 

Furthermore, for the complete graph $K_N$,  the Poincar\'e series simplifies to the following:

\begin{center}
$P_{K_N}(z_1,\dots,z_{N-1})$\\$=(1-(z_1\cdots z_{N-1})^N)/(1-z_1^N) \cdots (1-z_{N-1}^N)(1-z_1\cdots z_{N-1})$.
\end{center}

\end{corollary}
\begin{proof}
The first part is an immediate consequence of Proposition \ref{ratconegen_prop}.  For the second part, we show that $\mu_i=N$ for all integers $i$ from $1$ to $N-1$. For this, note that $D_i=1/N(\Delta_{K_N}({\bf I}_i-{\bf I}_N))$ where $\Delta_{K_N}$ is the Laplacian operator on $K_N$ and for an integer $1 \leq j \leq N$,  the function ${\bf I}_j$ is the indicator at the vertex $j$. Hence, $N \cdot D_i=N(v_i)-N(v_{N})$ is a principal divisor. To see that $N$ is the smallest non-negative integer with this property, suppose that $m \cdot D_i$ is a principal divisor for some integer $ 0< m<N$.  This would contradict the fact that $\Delta_{K_N}({\bf I}_i)$ and $\Delta_{K_N}({\bf I}_N)$ are contained in a basis of the Laplacian lattice $L_{K_N}$ of $K_N$.  Hence, the discriminant of the sublattice formed by $\{\mu_ie_i\}_{i=1}^{N-1}$ is equal to $N^{N-1}$. By Kirchhoff's matrix-tree theorem and Proposition \ref{kerex_prop}, the discriminant of ${\rm ker}(\phi_{K_N)}$ is $N^{N-2}$. 

Hence, the index of the sublattice spanned by $\{\mu_ie_i\}_{i=1}^{N-1}$ in ${\rm ker}(\phi_{K_N})$  is equal to $N$. This implies that the set $B$ contains precisely $N$ points and they are $(0,\dots,0),(1,\dots,1),\dots,(N-1,\dots,N-1)$. The formula for $P_{K_N}(z_1,\dots,z_{N-1})$ follows from the previous statement and the first part of the proposition.
\end{proof}

We leave the problem of obtaining a more explicit description of the Poincar\'e series of $D_1,\dots,D_{N-1}$ for arbitrary graphs for future work. This seems to need a better understanding of the order $\mu_i$ of $[D_i-D_N]$ in the Jacobian group.  In particular, we are not aware of a description of $\mu_i$ in terms of the underlying graph, we refer to \cite{BecGla16} for some related work.

\section{Lang's Conjecture for Brill-Noether Loci on Chains of Loops}\label{LangBNCol_sect}

In this section, we establish an analogue of Lang's Conjecture for Brill-Noether loci on chains of loops (Theorem \ref{langbncol_theo}). 
 In the following, we briefly recall relevant properties of the Jacobian of a chain of loops.  
 
 {\bf Jacobians of Chains of Loops:}  The Jacobian of a chain of loops of genus $g$ is a real torus of dimension $g$.  Pflueger \cite[Lemma 3.3]{Pfl17} showed that each divisor class in ${\rm Jac}(\Gamma_g)$ has a unique representative of the form $\sum_{i=1}^{g} (\xi_i) -g \cdot (w_g)$ where $\xi_i \in L_i$ for each $i$ from $1$ to $g$.   We refer to these representatives as \emph{Pflueger reduced divisors}. The point $\xi_j$ (and its associated divisor $(\xi_j)$) is called the $j$-th component of the Pflueger reduced divisor. The Jacobian is naturally a \emph{principally polarized tropical Abelian variety} \cite{MikZha08, FosRabShoSot18, BolBranChu17}, i.e. ${\rm Jac}(\Gamma_g)=\mathbb{R}^g/\Lambda$ where $\Lambda$ is a full rank sublattice of $\mathbb{R}^g$ and carries a positive semidefinite quadratic form induced by the period matrix of $\Gamma$, see Subsection \ref{tropabel_app} for more details. 


As evident from the statement of Theorem \ref{langbncol_theo}, subtori of the Jacobian play an important role. In the following, we identify two types of subtori. 
 
 {\bf Standard Topological Subtori:} The Jacobian of $\Gamma_g$ contains topological subtori corresponding to subchains of loops that can be described as follows. Given a non-empty subset $S \subseteq [1,\dots,g]$, the $|S|$-dimensional subtorus of ${\rm Jac}(\Gamma_g)$ associated to $S$ is defined as follows: 
\begin{center}
 $T_S=\{[D]|~D=\sum_{j \in S}(\xi_j)-|S|\cdot (w_g),~\xi_j \in L_j \}$  
 \end{center}
The uniqueness of Pflueger reduced divisors in each linear equivalence class implies that two distinct divisors of the form  $\sum_{i \in S}(\xi_i)-|S|\cdot (w_g)$ are not linearly equivalent and this implies that $T_S$ is a topological subtorus of dimension $|S|$.  We refer to this subtorus as the \emph{standard topological (sub)torus} $T_S$ of ${\rm Jac}(\Gamma_g)$ associated to $S$. A standard topological subtorus is, in general, not a subgroup of ${\rm Jac}(\Gamma_g)$. For an example, consider a chain of two loops and let $S=\{L_1\}$. For any pair of elements in $T_S$, their sum is not contained in $T_S$. However, standard topological subtori are cosets of certain subgroup tori that we now describe. 

{\bf Standard Subgroup Tori:}  Note that any divisor of degree one on a (single) loop is linearly equivalent to the divisor associated to a (unique) point \cite[Proof of Lemma 3.3]{Pfl17}. For $j \in [1,\dots,g]$, let $\mathfrak{o}_j$ be the unique point in $L_j$ that is linear equivalent (with respect to $L_j$) to $j \cdot (w_j)-(j-1) \cdot (v_j)$. We define the set $\mathfrak{T}_S$  as follows: 

\begin{center}
$\mathfrak{T}_S=\{[D]|~D=\sum_{j \in S} (\xi_j)+\sum_{j \notin S}(\mathfrak{o}_j)-g \cdot (w_g),~\xi_j \in L_j\}$ 
\end{center}

Note that divisors of the form $\sum_{j \in S} (\xi_j)+\sum_{j \notin S}(\mathfrak{o}_j)-g \cdot (w_g)$ are Pflueger reduced and the uniqueness of Pflueger reduced divisors in each divisor class implies that this set is a topological subtorus  of ${\rm Jac}(\Gamma_g)$ of dimension $|S|$.   As we shall see in  Proposition \ref{subgroup_prop},  the set $\mathfrak{T}_S$ is also a subgroup of ${\rm Jac}(\Gamma_g)$. In the following, we refer to $\mathfrak{T}_S$ as the \emph{standard subgroup torus} associated to $S$. Note that $T_S$ is equal to $t_S+\mathfrak{T}_S$ where $t_S=[-\sum_{j \notin S}(\mathfrak{o}_j)+(g-|S|) \cdot (w_g)]$.  The proof of the subgroup property of $\mathfrak{T}_S$ uses an algorithm to transform an arbitrary divisor to its Pflueger reduced linear equivalent, due to Pflueger \cite[Lemma 3.3]{Pfl17}, that we describe in the following.





\begin{enumerate}
\item Given a divisor $D \in \Gamma_g$,  first ensure that $D$ is supported only on the loops (not on the bridges). This can done, since for any point $p$ on the bridge between the loops $L_j$ and $L_{j+1}$: the divisor $(p)$ is linearly equivalent to both $(w_j)$ and $(v_{j+1})$.

\item Starting from the first loop, for each loop $L_j$ for $j$ from one to $g-1$, add a suitable multiple of $(w_j)-(v_{j+1})$ to $D$ such that the restriction of $D$ to $L_j$ has degree one.  Add and subtract $g \cdot (w_g)$ to the resulting divisor. 
Any divisor of degree one supported on the loop $L_j$ for $j$ from one to $g$  is linearly equivalent (with respect to  both $L_j$ and $\Gamma_g$, see Subsection \ref{lineq_app}) to $(p_j)$ for a (unique) point $p_j \in L_j$, see  \cite[Proof of Lemma 3.3]{Pfl17}.  Output $\sum_{j=1}^{g}(p_j)-g \cdot (w_g)$.  
 \end{enumerate}

\begin{proposition}\label{subgroup_prop}
The subset $\mathfrak{T}_S$ of ${\rm Jac}(\Gamma_g)$ is a subgroup.
\end{proposition}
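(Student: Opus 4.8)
The plan is to exhibit $\mathfrak{T}_S$ as the image of a group homomorphism into ${\rm Jac}(\Gamma_g)$; since the image of a homomorphism is automatically a subgroup, this settles the proposition. The homomorphism will be assembled loop by loop out of the group structures on the individual loops $L_j$, so that the algorithm transforming a divisor to its Pflueger reduced representative enters only through the single-loop and bridge facts it is built from.

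First I would record the per-loop structure. Each loop $L_j$ is a tropical curve of genus one, so by the fact used in Pflueger's proof of Lemma 3.3 every degree-one divisor class on $L_j$ has a unique point representative; the Abel--Jacobi map $\xi \mapsto [(\xi)-(\mathfrak{o}_j)]_{L_j}$ based at $\mathfrak{o}_j$ therefore identifies $L_j$ with ${\rm Jac}(L_j)$ as a group with identity $\mathfrak{o}_j$. I would then define $s_j : {\rm Jac}(L_j) \to {\rm Jac}(\Gamma_g)$ by viewing a divisor supported on $L_j$ as a divisor on $\Gamma_g$ (extension by zero) and passing to classes. Extension by zero is additive on divisors, so the only thing to check is that it descends to linear-equivalence classes, i.e. that two divisors supported on $L_j$ that are linearly equivalent on $L_j$ remain linearly equivalent on $\Gamma_g$; this is exactly the compatibility recorded in Appendix \ref{lineq_app}. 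Granting this, each $s_j$ is a group homomorphism, and I set
\begin{center}
$\Psi_S = \sum_{j \in S} s_j \circ \pi_j : \prod_{j \in S} {\rm Jac}(L_j) \longrightarrow {\rm Jac}(\Gamma_g),$
\end{center}
where $\pi_j$ is the projection onto the $j$-th factor; this is a homomorphism because ${\rm Jac}(\Gamma_g)$ is abelian.

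It remains to identify the image of $\Psi_S$ with $\mathfrak{T}_S$. By construction the image is $\{[\sum_{j\in S}((\xi_j)-(\mathfrak{o}_j))] : \xi_j \in L_j\}$, so I must show that for every choice of $\xi_j$,
\begin{center}
$\textstyle\sum_{j\in S}\big((\xi_j)-(\mathfrak{o}_j)\big) \ \sim\ \sum_{j\in S}(\xi_j) + \sum_{j\notin S}(\mathfrak{o}_j) - g\cdot(w_g).$
\end{center}
After cancelling $\sum_{j\in S}(\xi_j)$ this reduces to the single identity $\sum_{j=1}^{g}(\mathfrak{o}_j) \sim g\cdot(w_g)$, which says precisely that the Pflueger reduced representative of the identity of ${\rm Jac}(\Gamma_g)$ has $j$-th component $\mathfrak{o}_j$ for every $j$. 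I would prove this by telescoping: by definition $\mathfrak{o}_j \sim_{L_j} j\cdot(w_j)-(j-1)\cdot(v_j)$, which by the descent property also holds in $\Gamma_g$; summing over $j$ and using the bridge relations $(w_j)\sim(v_{j+1})$ to rewrite $(v_j)$ as $(w_{j-1})$, the sum $\sum_{j=1}^g\big(j\cdot(w_j)-(j-1)\cdot(w_{j-1})\big)$ telescopes to $g\cdot(w_g)$. Hence the image of $\Psi_S$ is exactly $\mathfrak{T}_S$, and $\mathfrak{T}_S$ is a subgroup.

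The main obstacle is the descent step for $s_j$: verifying that linear equivalence on the single loop $L_j$ is preserved inside $\Gamma_g$. Concretely, given a piecewise-linear function realising an equivalence on $L_j$, one extends it to $\Gamma_g$ by making it constant on each of the two connected pieces of $\Gamma_g \setminus L_j$ (which are disjoint precisely because the loops are arranged in a chain); the added edges at $v_j$ and $w_j$ carry outgoing slope zero and so contribute nothing to the divisor, giving $(f)_{\Gamma_g}=(f)_{L_j}$. This descent, together with the telescoping identity pinning down the identity element's Pflueger form, is exactly what makes $\mathfrak{T}_S$ a genuine subgroup rather than merely a coset such as the standard topological torus $T_S$.
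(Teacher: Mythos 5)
Your proof is correct, but it takes a genuinely different route from the paper's. The paper verifies the two subgroup axioms directly: it runs Pflueger's reduction algorithm on $D_1+D_2$ and on $-D_1$ and checks that for every $j \notin S$ the $j$-th component of the reduced divisor is again the point linearly equivalent to $j\cdot(w_j)-(j-1)\cdot(v_j)$, i.e.\ $\mathfrak{o}_j$. You instead exhibit $\mathfrak{T}_S$ as the image of the homomorphism $\Psi_S:\prod_{j\in S}{\rm Jac}(L_j)\to{\rm Jac}(\Gamma_g)$, with two inputs: descent of linear equivalence from a single loop to $\Gamma_g$ (which is exactly the forward direction of Proposition \ref{lineqcol_lem} in Appendix \ref{lineq_app}, so it is available) and the telescoping identity $\sum_{j=1}^{g}(\mathfrak{o}_j)\sim g\cdot(w_g)$, which correctly identifies the Pflueger reduced form of the identity class; I checked the telescoping and it works, since the coefficient of $(v_1)$ is zero and $(v_j)\sim(w_{j-1})$ for $j\geq 2$ via the bridges. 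What your approach buys is structural: the subgroup property is automatic once the image is identified, no case analysis on addition versus inversion is needed, and (after noting injectivity of $\Psi_S$, which follows from uniqueness of Pflueger reduced divisors) you essentially get for free the paper's subsequent remark that $\mathfrak{T}_S$ is isomorphic to the Jacobian of a chain of $|S|$ loops. What the paper's computation buys is that the same per-component calculation is reused verbatim in Proposition \ref{stasubchar_prop} and Lemma \ref{pflgred_lem}, so it keeps all the arguments in one uniform language. The only cosmetic slip is your parenthetical that $\Gamma_g\setminus L_j$ has two connected pieces; for $j=1$ or $j=g$ there is only one (or none when $g=1$), but the extension-by-a-constant argument goes through unchanged, exactly as in the appendix.
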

\begin{proof}
We show that $\mathfrak{T}_S$ is closed under addition (the group operation) and under inverses.

{\bf Closure under addition:} Let $D_1=\sum_{j \in S} (\xi^{(1)}_j)+\sum_{j \notin S}(\mathfrak{o}_j)-g \cdot (w_g)$ and $D_2=\sum_{j \in S} (\xi^{(2)}_j)+\sum_{j \notin S}(\mathfrak{o}_j)-g \cdot (w_g)$. By the reduction algorithm, we note that for every $j \notin S$ the $j$-th component of the Pflueger reduced divisor of $D_1+D_2$ is the unique point in $L_j$ whose associated divisor is linearly equivalent to $-j \cdot (w_j)+(j-1) \cdot (v_j)+2j \cdot (w_j)-2(j-1) \cdot (v_j)=j \cdot (w_j)-(j-1) \cdot(v_j)$. Hence, the $j$-th component of  the Pflueger reduced divisor of $D_1+D_2$ is $\mathfrak{o}_j$ for every $j \notin S$. This implies that $[D_1+D_2] \in \mathfrak{T}_S$.  

{\bf Closure under inverses:} As in the previous case, for every $j \notin S$ we compute the $j$-th component of the Pflueger reduced divisor of $-D_1$ to be the unique point in $L_j$ whose associated divisor is linearly equivalent to $2j \cdot (w_j)-2(j-1) \cdot (v_j)-j \cdot (w_j)+(j-1) \cdot (v_j)=j \cdot (w_j)-(j-1) \cdot (v_j)$. Hence, $[-D_1] \in \mathfrak{T}_S$.  

 \end{proof}

 Furthermore, $\mathfrak{T}_S$ inherits the structure of a tropical Abelian subvariety from ${\rm Jac}(\Gamma_g)$ (see Subsection \ref{tropabel_app} for the corresponding definitions).  More precisely, suppose that ${\rm Jac}(\Gamma_g)=\mathbb{R}^g/\Lambda$ then $\mathfrak{T}_S=V'/\Lambda'$ where $V'$ is a subspace of $\mathbb{R}^g$ and $\Lambda' \subseteq V'$ is a (saturated) sublattice of $\Lambda$ with full rank in $V'$.  The principal polarisation of ${\rm Jac}(\Gamma_g)$ restricted to $V'$  induces a principal polarisation on $\mathfrak{T}_S$. 
 
 In fact, $\mathfrak{T}_S$ as a tropical Abelian variety is isomorphic to the Jacobian of a chain of $|S|$ loops where the $i$-th loop has edge length equal to the $i$-th loop in $S$ (where the loops in $S$ are in increasing order of their index).

\begin{proof}({\bf Proof of Theorem \ref{langbncol_theo}})
Since $W^r_d(\Gamma_g) \cap H \neq \emptyset$, we have $W^r_d(\Gamma_g)  \neq \emptyset$. By Pflueger's theorem \cite[Theorem 1.4]{Pfl17}, $W^r_d(\Gamma_g)=\cup_{i=1}^{\tilde{s}}(\kappa_i+T_{S_i})$ where $\kappa_i \in {\rm Jac}(\Gamma_g)$ and $T_{S_i}$ is a standard topological subtorus.  Since $T_{S_i}=t_{S_i}+\mathfrak{T}_{S_i}$ where $t_{S_i}=[-\sum_{j \notin S_i}(\mathfrak{o}_j)+(g-|S_i|) \cdot (w_g)]$, we have $W^r_d(\Gamma_g)=\cup_{i=1}^{\tilde{s}}(\kappa_i+t_{S_i}+\mathfrak{T}_{S_i})$. 
 Suppose that there are $s$ elements such that $(\kappa_i+t_{S_i}+\mathfrak{T}_{S_i}) \cap H \neq \emptyset$ (we know that $s \geq 1$ since $W^r_d(\Gamma_g) \cap H  \neq \emptyset$). By a suitable reordering, we assume that they correspond to indices one to $s$. For each $i$ from one to $s$, we set $A_i$ to be $\mathfrak{T}_{S_i}$ (along with the structure of a tropical Abelian subvariety.) 

 For each $i$ from one to $s$, since $(\kappa_i+t_{S_i}+\mathfrak{T}_{S_i}) \cap H \neq \emptyset$ there exists an element in $H$ that is contained in $(\kappa_i+t_{S_i}+\mathfrak{T}_{S_i})$. We set $\gamma_i$ to be any such element and note that $\kappa_i+t_{S_i}-\gamma_i \in \mathfrak{T}_{S_i}$.  Hence, $\kappa_i+t_{S_i}+\mathfrak{T}_{S_i}=\gamma_i+\mathfrak{T}_{S_i} \subseteq W^r_d(\Gamma_g)$  and $W^r_d(\Gamma_g) \cap H=(\cup_{i=1}^{s} (\gamma_i+A_i)) \cap H=\cup_{i=1}^{s} ((\gamma_i+A_i) \cap H)$.  An elementary fact from group theory \cite[Problem IV.6.a-13 ]{Mir95}  tells us that $(\gamma_i+A_i) \cap H=\gamma_i+(A_i \cap H)$ (note that $\gamma_i \in H$) and hence, $W^r_d(\Gamma_g) \cap H=\cup_{i=1}^{s} (\gamma_i+(A_i \cap H))$. We conclude that the tropical Abelian subvarieties $A_1,\dots,A_s$ and the corresponding translates $\gamma_1,\dots,\gamma_s \in H$ satisfy both conditions in Theorem \ref{langbncol_theo}. \end{proof}

 \begin{example}\label{langgen_ex} \rm
Consider a Brill-Noether general chain of three loops, denoted by $\Gamma_3$. Furthermore, suppose that the edge lengths $\ell_2$ (the length of the loop $L_2$) and $\ell(v_2w_2)$ are non-commensurable. We refer to \cite[Definition 4.1]{CooDraPayRob12} for a description of Brill-Noether general chains of loops. Since $\Gamma_3$ is Brill-Noether general, $W^r_d$ is non-empty if and only if $\rho(g,r,d)=g-(r+1)(g-d+r) \geq 0$. Hence, the only non-empty $W^r_d$s for integers $r$ and $d$ such that $0 \leq d \leq 4$ (note that $2g-2=4$) and $0 \leq r \leq d$ are $W^0_0,W^{0}_1,W^0_2,W^0_3,W^0_4,W^1_3, W^1_4$ and $W^2_4$. Note that $\Gamma_3$ is trigonal with the gonality determined by the $W^1_3$.  This $W^1_3$ is a union of three standard topological subtori, namely $A_1=[(w_1)+(v_2)-2(w_3)]+T_{\{3\}},~A_2=[(w_1)+(v_3)-2(w_3)]+T_{\{2\}},~A_3=[(w_2)+(v_3)-2(w_3)]+T_{\{1\}}$.  This decomposition can be computed either by inspection or by using the algorithm in \cite[Section 3]{Pfl17}. 

Consider the subgroup $H$ of ${\rm Jac}(\Gamma_3)$ generated by $[D_1]$ and $[D_2]$ where $D_1=(w_1)+(v_2)-2(w_3)$ and $D_2=(w_1)-(w_3)$. The subgroup $H$ has non-empty intersection with $A_1$ since $[D_1] \in A_1$. By examining the Pflueger reduced divisor of $m_1D_1+m_2D_2$ for integers $m_1,~m_2$ and using the assumption that $\ell_2$ and $\ell(v_2w_2)$ are non-commensurable, we find that $H$ intersects $A_2$ non-trivially but $H \cap A_3=\emptyset$. For $\gamma_1=[D_1]$ and $\gamma_2=[D_1+D_2]$, we have the decomposition $W^1_3 \cap H=(\gamma_1+(A_1 \cap H)) \cap (\gamma_2+(A_2 \cap H))$ as asserted by Theorem \ref{langbncol_theo}.
\qed 
\end{example}

 \section{Rationality of Poincar\'e Series of Divisors on Chains of Loops}\label{poinratcol_sect}



\subsection{Proof of Theorem \ref{poinratcol_theo}}

The proof is analogous to the case of algebraic curves \cite[Theorem 4.1]{Cut03} with Theorem \ref{langbncol_theo} playing the role of Lang's Conjecture.   Recall that by definition 

\begin{center}

$P_{\Gamma_g}(z_1,\dots,z_k)=\sum_{(n_1,\dots,n_k) \in \mathbb{N}^k}(r_{\Gamma_g}(n_1D_1+\dots+n_kD_k)+1)z_1^{n_1} \cdots z_k^{n_k}$.

\end{center}

Let $d_i={\rm deg}(D_i)$ for $i$ from one to $k$. We decompose  $P_{\Gamma_g}$ into graded pieces based on the weighted degree (with weights $d_1,\dots,d_k$ on $z_1\dots,z_k$ respectively) as follows.
Let $Q^{(l)}_{\Gamma_g}=\{ (n_1,\dots,n_k)  \in \mathbb{N}^k|~\sum_{i=1}^{k} n_id_i=l\}$, we define

\begin{center} $P^{(l)}_{\Gamma_g}(z_1,\dots,z_k)=\sum_{(n_1,\dots,n_k )\in Q^{(l)}_{\Gamma_g}}(r_{\Gamma_g}(n_1D_1+\dots+n_kD_k)+1)z_1^{n_1} \cdots z_k^{n_k}$.
\end{center} 




By construction, $P_{\Gamma_g}=\sum_{l \in \mathbb{Z}} P^{(l)}_{\Gamma_g}$.  As in the case of graphs, note that $P^{(l)}_{\Gamma_g}=0$ for all $l<0$ and 

\begin{center}
$\sum_{l=2g-1}^{\infty} P^{(l)}_{\Gamma_g}(z_1,\dots,z_k)=\sum_{(n_1,\dots,n_k) \in \mathbb{N}^k,~\sum_{i=1}^{k}n_i \cdot d_i \geq 2g-1} (r_{\Gamma_g}(n_1D_1+\dots+n_kD_k)+1)z_1^{n_1} \cdots z_k^{n_k}=\sum_{(n_1,\dots,n_k) \in \mathbb{N}^k,~\sum_{i=1}^{k}n_i \cdot d_i \geq 2g-1}(\sum_{i=1}^{k}n_i \cdot d_i-g+1) z_1^{n_1} \cdots z_k^{n_k}$.  
\end{center}

The second equality invokes the Riemann-Roch theorem for tropical curves \cite[Corollary 3.8]{GatKer08}, \cite[Theorem 7.4]{MikZha08} and the rationality of this series is exactly as in the case of graphs (see Section \ref{poinlarge_subsect} for more details).

Next, we consider $P^{(l)}_{\Gamma_g}$ for $l$ between zero and $2g-2$. Let $\bar{D_i}=D_i-d_i \cdot (w_g)$. We define the group homomorphism $\bar{\phi}_{\Gamma_g}: \mathbb{Z}^k \rightarrow {\rm Jac}(\Gamma_g)$ as $\bar{\phi}_{\Gamma_g}(m_1,\dots,m_k)=[\sum_{i=1}^{k}m_i \bar{D_i}]$.   Recall that $W^r_d(\Gamma_g)=\{ [D] \in {\rm Jac}(\Gamma_g)|~r_{\Gamma_g}(D+d \cdot (w_g)) \geq r\}$. For integers $r,~d$, we refine $Q^{(l)}_{\Gamma_g}$ to the subset:

\begin{center}
$Q^{(r,d)}_{\Gamma_g}= \{(n_1,\dots,n_k) \in \mathbb{Z}^k|~ \sum_{i=1}^{k}n_id_i=d,\bar{\phi}_{\Gamma_g}(n_1,\dots,n_k) \in W^r_d(\Gamma_g)\}$.
\end{center}

Note that $(n_1,\dots,n_k) \in Q^{(r,d)}_{\Gamma_g}$ if and only if  ${\rm deg}(\sum_{i=1}^{k}n_i D_i)=\sum_{i=1}^{k}n_i \cdot d_i=d$ and $r_{\Gamma_g}(\sum_{i=1}^{k}n_i D_i) \geq r$. We further refine $P^{(l)}_{\Gamma_g}$ as follows: 

\begin{center}
$P^{(r,d)}_{\Gamma_g}(z_1,\dots,z_k)=\sum_{(n_1,\dots,n_k) \in \mathbb{N}^k \cap Q^{(r,d)}_{\Gamma_g}}z_1^{n_1} \cdots z_k^{n_k}$.
\end{center}

For every non-negative integer $l$, we have $P^{(l)}_{\Gamma_g}=\sum_{r=0}^{l} P^{(r,l)}_{\Gamma_g}$. To see this, first observe that the support of both series is contained in $\{(n_1,\dots,n_k) \in \mathbb{N}^k|~\sum_{i=1}^{k}n_i \cdot d_i=l\}$. If the rank of $\sum_{i=1}^{k}n_iD_i$ is minus one then the coefficient of $z_1^{n_1}\cdots z_k^{n_k}$ is zero on both sides.  On the other hand, if  the rank of $\sum_{i=1}^{k}n_iD_i$  is non-negative, then the coefficient of $z_1^{n_1} \cdots z_k^{n_k}$ is $r_{\Gamma_g}(\sum_{i=1}^{k}n_iD_i)+1$ on both sides. This holds by definition for the LHS and for the RHS, note that $z_1^{n_1} \cdots z_k^{n_k}$ appears in  $P^{(r,l)}_{\Gamma_g}$ (with coefficient one) precisely for $r$ from $0,\dots,r_{\Gamma_g}(\sum_{i=1}^{k}n_iD_i)$ (note that $r_{\Gamma_g}(\sum_{i=1}^{k}n_iD_i) \leq l$).   Hence, it suffices to show that $P^{(r,d)}_{\Gamma_g}$ is rational for every choice of $(r,d)$.

We show the rationality of $P^{(r,d)}_{\Gamma_g}$ via Theorem \ref{langbncol_theo}. More precisely, via Theorem \ref{langbncol_theo}, we show that each non-empty $Q^{(r,d)}_{\Gamma_g}$ is a finite union of affine sublattices of $\mathbb{Z}^k$ where by an affine sublattice, we mean a coset of a sublattice of $\mathbb{Z}^k$. This implies that $P^{(r,d)}_{\Gamma_g}$ is an integer combination of finitely many lattice point enumerating functions of rational polyhedra and its rationality follows from their rationality \cite[Theorem 3.1]{Bar99}, \cite[Corollary 7.6]{CutHerReg03}.

\begin{proposition}\label{qrddec_prop} Let  $(r,d)$  be a pair of integers. If $Q^{(r,d)}_{\Gamma_g} \neq \emptyset$, then it is a finite union of affine sublattices of $\mathbb{Z}^k$. \end{proposition}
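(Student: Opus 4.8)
The plan is to combine Theorem \ref{langbncol_theo} with the elementary fact that the preimage of a coset of a subgroup under a group homomorphism is either empty or again a coset, together with the observation that the degree condition cuts out an affine sublattice. Throughout, write $\phi := \phi_{\Gamma_g,\bar{D_1},\dots,\bar{D_k}}$ and let $H = {\rm image}(\phi)$ be the subgroup of ${\rm Jac}(\Gamma_g)$ generated by $[\bar{D_1}],\dots,[\bar{D_k}]$. Since every element of the image of $\phi$ lies in $H$, the condition $\phi(n_1,\dots,n_k) \in W^r_d(\Gamma_g)$ is equivalent to $\phi(n_1,\dots,n_k) \in W^r_d(\Gamma_g) \cap H$; in particular, if $Q^{(r,d)}_{\Gamma_g,D_1,\dots,D_k}$ is non-empty then $W^r_d(\Gamma_g) \cap H \neq \emptyset$, so Theorem \ref{langbncol_theo} applies.

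First I would apply Theorem \ref{langbncol_theo} to obtain tropical Abelian subvarieties $A_1,\dots,A_s$ of ${\rm Jac}(\Gamma_g)$ and translates $\gamma_1,\dots,\gamma_s \in H$ with $W^r_d(\Gamma_g) \cap H = \cup_{i=1}^{s}(\gamma_i + (A_i \cap H))$. The set of $(n_1,\dots,n_k)$ with $\phi(n_1,\dots,n_k) \in W^r_d(\Gamma_g)$ is then $\phi^{-1}(W^r_d(\Gamma_g) \cap H) = \cup_{i=1}^{s}\phi^{-1}(\gamma_i + (A_i \cap H))$. Since each $\gamma_i$ lies in $H = {\rm image}(\phi)$, we may choose $m^{(i)} \in \mathbb{Z}^k$ with $\phi(m^{(i)}) = \gamma_i$; a direct computation then shows $\phi^{-1}(\gamma_i + (A_i \cap H)) = m^{(i)} + \phi^{-1}(A_i)$, where I have used that $\phi^{-1}(A_i \cap H) = \phi^{-1}(A_i)$ because the image of $\phi$ is contained in $H$. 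As $A_i$ is a subgroup of ${\rm Jac}(\Gamma_g)$ and $\phi$ is a homomorphism, $\phi^{-1}(A_i)$ is a subgroup of $\mathbb{Z}^k$, hence a sublattice, so $\{(n_1,\dots,n_k) : \phi(n_1,\dots,n_k) \in W^r_d(\Gamma_g)\}$ is a finite union of affine sublattices of $\mathbb{Z}^k$.

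Next I would incorporate the degree constraint. The set $\{(n_1,\dots,n_k) \in \mathbb{Z}^k : \sum_{i=1}^{k}n_i d_i = d\}$ is, when non-empty, a coset of the sublattice $\{(n_1,\dots,n_k) : \sum_{i=1}^{k} n_i d_i = 0\}$, i.e. an affine sublattice. Since $Q^{(r,d)}_{\Gamma_g,D_1,\dots,D_k}$ is the intersection of this affine sublattice with the finite union produced in the previous step, and since the intersection of two affine sublattices is either empty or again an affine sublattice (a coset of the intersection of the two underlying sublattices), the set $Q^{(r,d)}_{\Gamma_g,D_1,\dots,D_k}$ is a finite union of affine sublattices of $\mathbb{Z}^k$, as claimed.

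The substantive content of this proposition is Theorem \ref{langbncol_theo}; granting it, the argument is purely lattice-theoretic and I expect no genuine obstacle. The one point that must be handled with care, and the reason Theorem \ref{langbncol_theo} is stated with translates $\gamma_i$ lying in $H$ rather than merely in ${\rm Jac}(\Gamma_g)$, is precisely that $\gamma_i \in {\rm image}(\phi)$ is what guarantees that each preimage $\phi^{-1}(\gamma_i + (A_i \cap H))$ is non-empty and equal to a genuine coset $m^{(i)} + \phi^{-1}(A_i)$. Had the translates only been known to lie in the Jacobian, the preimages could be empty and the clean coset description would break down.
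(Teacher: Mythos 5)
Your proof is correct and follows essentially the same route as the paper's: apply Theorem \ref{langbncol_theo} to $H = {\rm image}(\phi_{\Gamma_g,\bar{D_1},\dots,\bar{D_k}})$ and pull back each coset $\gamma_i + (A_i \cap H)$ to a coset ${\bf q}_i + \phi^{-1}(A_i \cap H)$ of a sublattice of $\mathbb{Z}^k$, using precisely the fact that $\gamma_i \in H$ to produce the base point. Your explicit intersection with the affine sublattice $\{\sum_{i=1}^{k} n_i d_i = d\}$ is a detail the paper's write-up leaves implicit, but it changes nothing substantive.
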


\begin{proof}
Let $H$ be the subgroup  of ${\rm Jac}(\Gamma_g)$ generated by $[\bar{D_1}],\dots,[\bar{D_k}]$.  Since the image of $\bar{\phi}_{\Gamma_g}$ is equal to $H$, we have $Q^{(r,d)}_{\Gamma_g}=\{(n_1,\dots,n_k) \in \mathbb{Z}^k|~\sum_{i=1}^{k}n_id_i=d,~\bar{\phi}_{\Gamma_g}(n_1,\dots,n_k) \in W^r_d(\Gamma_g) \cap H\}$. If $Q^{(r,d)}_{\Gamma_g} \neq \emptyset$ then $W^r_d(\Gamma_g) \cap H \neq \emptyset$ and  by Theorem \ref{langbncol_theo}, we have   $Q^{(r,d)}_{\Gamma_g}=\{(n_1,\dots,n_k) \in \mathbb{Z}^k|~~\sum_{i=1}^{k}n_id_i=d, \bar{\phi}_{\Gamma_g}(n_1,\dots,n_k) \in \cup_{i=1}^{s}(\gamma_i+(A_i \cap H))\}$ where each ~$\gamma_i \in H$ and each $A_i$ is a tropical Abelian subvariety of ${\rm Jac}(\Gamma_g)$.

We analyse the fibre of $\bar{\phi}_{\Gamma_g}$ over each $\gamma_i+(A_i \cap H)$.  Let $\Lambda_i$ be the fibre of $\bar{\phi}_{\Gamma_g}$ over $A_i \cap H$. Since $\bar{\phi}_{\Gamma_g}$ is a group homomorphism and $A_i$ is a subgroup of ${\rm Jac}(\Gamma_g)$, we know from elementary group theory that $\Lambda_i$ is a subgroup of $\mathbb{Z}^k$. The only subgroups of $\mathbb{Z}^k$ are sublattices and hence, $\Lambda_i$ is a sublattice of $\mathbb{Z}^k$.  Since $\gamma_i \in H$, we know that there is a point ${\bf m}_i \in \mathbb{Z}^k$ whose image under $\bar{\phi}_{\Gamma_g}$ is $\gamma_i$.  Hence, the fibre of $\bar{\phi}_{\Gamma_g}$ over $\gamma_i+(A_i \cap H)$ is ${\bf m}_i+\Lambda_i$. Furthermore, $Q^{(r,d)}_{\Gamma_g}$ is the intersection of the union of affine lattices $\cup_{i=1}^{s}({\bf m_i}+\Lambda_i)$ with the affine lattice $\{(n_1,\dots,n_k) \in \mathbb{Z}^k|~\sum_{i=1}^kn_id_i=d\}$. Since this intersection is non-empty, we know from elementary group theory that this is a finite union of affine sublattices. 
\end{proof}

By Proposition \ref{qrddec_prop}, consider the decomposition $Q^{(r,d)}_{\Gamma_g}=\cup_{i \in \mathcal{F}} \hat{\Lambda}_i$ where each $\hat{\Lambda}_i={\bf q_i}+\Lambda_i$ is an affine sublattice, i.e. ${\bf q_i} \in \mathbb{Z}^k$, $\Lambda_i$ is a sublattice of $\mathbb{Z}^k$ and $\mathcal{F}$ is a finite set.  Note that a non-empty finite intersection of affine sublattices is  also an affine sublattice. For a finite subset $S$ of $\mathcal{F}$, let $\hat{\Lambda}_S$ denote $\cap_{ i \in S} \hat{\Lambda}_i$. We omit the brackets in the subscript while denoting singletons.  By the inclusion-exclusion formula, we have:

\begin{center}

$P^{(r,d)}_{\Gamma_g}(z_1,\dots,z_k)= $\\
$\sum_{i \in \mathcal{F}} \sum_{(n_1,\dots,n_k) \in \mathbb{N}^k \cap \hat{\Lambda}_i} z_1^{n_1} \cdots z_k^{n_k}-\sum_{|S|=2}  \sum_{(n_1,\dots,n_k) \in \mathbb{N}^k \cap \hat{\Lambda}_S} z_1^{n_1} \cdots z_k^{n_k}+\dots+(-1)^{|\mathcal{F}|+1}  \sum_{(n_1,\dots,n_k) \in \mathbb{N}^k \cap \hat{\Lambda}_\mathcal{F}} z_1^{n_1} \cdots z_k^{n_k}.$

\end{center}

Note that if $\mathbb{N}^k \cap \hat{\Lambda}_S$ is empty  for some subset $S$, then the corresponding sum is taken to be zero. Each term in the above decomposition is (an affine) lattice point enumerating function of a rational polyhedron and is hence rational (\cite[Theorem 3.1]{Bar99},  \cite[Corollary 7.6]{CutHerReg03}).  Hence,  $P^{(r,d)}_{\Gamma_g}$ is also rational. Furthermore, since
\begin{center} $P_{\Gamma_g}=\sum_{d=0}^{2g-2}\sum_{r=0}^{d}P^{(r,d)}_{\Gamma_g}+\sum_{d=2g-1}^{\infty}P^{(d)}_{\Gamma_g}$. \end{center}
We conclude that $P_{\Gamma_g}$ is itself rational. The claim that this rational function agrees with the corresponding power series at every point where the power series is absolutely convergent follows from the corresponding property for each lattice point enumerating function in the sum. 
\qed

\subsection{Explicit Construction of the Affine Sublattices} \label{afflatcon_subsect}

In the following, we construct the affine sublattices appearing in the decomposition of $Q^{(r,d)}_{\Gamma_g}$ from Proposition \ref{qrddec_prop}. The key to this is the following characterisation of cosets of standard subgroup tori. 

\begin{proposition}\label{stasubchar_prop}
Let $t \in {\rm Jac}(\Gamma_g)$ and let $(\sum_{j=1}^{g}\beta_j(t))-g \cdot (w_g)$ be its Pflueger reduced divisor. A divisor class is contained in $t+\mathfrak{T}_S$ if and only if for every $j \notin S$, the $j$-th component its Pflueger reduced divisor is equal to $\beta_j(t)$.
\end{proposition}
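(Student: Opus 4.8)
The plan is to reduce the proposition to a single structural fact: that each ``$j$-th component map,'' recentered at the reference point $\mathfrak{o}_j$, is a group homomorphism on ${\rm Jac}(\Gamma_g)$. For each $j \in \{1,\dots,g\}$ and each class $[D]$, write $\beta_j([D]) \in L_j$ for the $j$-th component of the Pflueger reduced divisor of $[D]$, and define
\[ \tilde\beta_j([D]) = \big[(\beta_j([D])) - (\mathfrak{o}_j)\big] \in {\rm Jac}(L_j), \]
the degree-zero class on the single loop $L_j$ obtained by subtracting $\mathfrak{o}_j$. First I would record that the identity class of ${\rm Jac}(\Gamma_g)$ has $j$-th component $\mathfrak{o}_j$ for every $j$: using $\mathfrak{o}_j \sim_{L_j} j\cdot(w_j)-(j-1)\cdot(v_j)$ together with the bridge relations $(v_{j+1}) \sim (w_j)$ one gets $\sum_j(\mathfrak{o}_j) \sim \sum_j\big(j\cdot(w_j)-(j-1)\cdot(v_j)\big) \sim g\cdot(w_g)$, so $\sum_j(\mathfrak{o}_j)-g\cdot(w_g)$ is principal. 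Hence $\mathfrak{T}_S = \bigcap_{j\notin S}\ker(\tilde\beta_j)$ as sets. Granting that each $\tilde\beta_j$ is a homomorphism, the proposition becomes purely formal: a class $[E]$ lies in $t+\mathfrak{T}_S$ exactly when $[E]-t \in \bigcap_{j\notin S}\ker(\tilde\beta_j)$, i.e. when $\tilde\beta_j([E]) = \tilde\beta_j(t)$ for all $j\notin S$, which is equivalent to $\beta_j([E])=\beta_j(t)$ for all $j \notin S$ because $\tilde\beta_j$ differs from $\beta_j$ only by the fixed translation $\mathfrak{o}_j$.

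It remains to prove that $\tilde\beta_j$ is a homomorphism, and here I would argue exactly as in the proof of Proposition \ref{subgroup_prop} by running Pflueger's reduction algorithm on the sum of two Pflueger reduced divisors. Let $[D]$ and $[D']$ have Pflueger reduced divisors $\sum_i(\beta_i)-g\cdot(w_g)$ and $\sum_i(\beta_i')-g\cdot(w_g)$, and reduce the representative $\sum_i\big((\beta_i)+(\beta_i')\big)-2g\cdot(w_g)$ of $[D]+[D']$. The key is to track the degree pushed across each bridge: for $i<g$ the loop $L_i$ carries the degree-two divisor $(\beta_i)+(\beta_i')$, so processing loops $1,\dots,g-1$ in order pushes, at loop $L_j$, a total of $j-1$ units onto $(v_j)$ and $j$ units off $(w_j)$, whence for every $j$
\[ \beta_j([D]+[D']) \sim_{L_j} (\beta_j)+(\beta_j')+(j-1)\cdot(v_j)-j\cdot(w_j). \]
Subtracting $(\mathfrak{o}_j) \sim_{L_j} j\cdot(w_j)-(j-1)\cdot(v_j)$ twice and comparing yields $\tilde\beta_j([D]+[D'])=\tilde\beta_j([D])+\tilde\beta_j([D'])$; closure under inverses is the analogous one-divisor computation already carried out in Proposition \ref{subgroup_prop}.

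The main obstacle I anticipate is the degree bookkeeping on the terminal loop $L_g$, where the $-2g\cdot(w_g)$ shift resides: one must verify that after the $g-1$ units accumulated from the earlier loops flow into $L_g$, the restriction there has precisely the degree $1-g$ forced by a Pflueger reduced divisor, and that the $g$-th component still obeys the same uniform formula as the interior loops. This is a finite but delicate cancellation, and it is the one place where a sign or index slip would destroy additivity. As a cross-check, and as an alternative route for the $\Leftarrow$ direction that sidesteps full additivity, I note that both $t+\mathfrak{T}_S$ and the candidate set $\{[E] : \beta_j([E])=\beta_j(t),\ j\notin S\}$ are continuous injective images of $\prod_{j\in S}L_j$ by the uniqueness of Pflueger reduced divisors, hence embedded subtori of the same dimension $|S|$; once the $\Rightarrow$ inclusion is in hand, invariance of domain forces equality.
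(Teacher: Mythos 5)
Your proof is correct, and it takes a genuinely different route from the paper's. The paper argues both implications by direct computation: the forward one by running the reduction algorithm on an element of $t+\mathfrak{T}_S$, and the reverse one by explicitly constructing, for a class $[D]$ whose components off $S$ agree with those of $t$, a class $[D']\in\mathfrak{T}_S$ with $t+[D']=[D]$ (its $j$-th components for $j\in S$ are written down by hand as the points equivalent to $(\xi_j)-(\beta_j(t))+j\cdot(w_j)-(j-1)\cdot(v_j)$ and checked via the reduction algorithm). You instead isolate one structural fact --- that the recentered component maps $\tilde{\beta}_j\colon{\rm Jac}(\Gamma_g)\to{\rm Jac}(L_j)$ are homomorphisms --- identify $\mathfrak{T}_S$ with $\bigcap_{j\notin S}\ker\tilde{\beta}_j$, and let group theory deliver both directions at once; in particular your reverse implication needs no explicit preimage. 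The computation underlying your homomorphism claim, namely that the reduction algorithm applied to a sum of two Pflueger reduced divisors produces the correction $(j-1)\cdot(v_j)-j\cdot(w_j)$ on each loop (including, after the add-and-subtract of $g\cdot(w_g)$, the terminal loop $L_g$), is exactly the computation the paper carries out in Proposition \ref{subgroup_prop} and records in coordinates in Lemma \ref{pflgred_lem} (take $k=2$, $\alpha_1=\alpha_2=1$ there and subtract the coordinate of $\mathfrak{o}_j$ to see your additivity identity fall out), so your key lemma is sound and assumes nothing the paper does not already prove. What your packaging buys is economy: Proposition \ref{stasubchar_prop}, Proposition \ref{subgroup_prop} and the identity $T_S=t_S+\mathfrak{T}_S$ all become formal consequences of the single homomorphism lemma. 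What the paper's version buys is that it never has to define the maps $\tilde{\beta}_j$ or prove additivity in general, only verify the two specific reductions at hand. Your closing invariance-of-domain argument is a valid alternative for the reverse inclusion but is unnecessary machinery once the homomorphism property is in place.
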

\begin{proof}
$(\Rightarrow)$ By the reduction algorithm, we verify that the $j$-th component of the Pflueger reduced divisor of any element in $t+\mathfrak{T}_S$ for $j \notin S$ is $\beta_j(t)$.

$(\Leftarrow)$  Suppose that a divisor class $[D]$ has Pflueger reduced divisor of the form $\sum_{j \in S}(\xi_j)+\sum_{j \notin S}(\beta_j(t))-g \cdot (w_g)$. Consider the divisor class $[D'] \in \mathfrak{T}_S$ whose Pflueger reduced divisor is equal to 
$\sum_{j \in S}(\xi'_j)+\sum_{j \notin S}(\mathfrak{o}_j)-g \cdot (w_g)$ where each $\xi'_j$ is the unique point in $L_j$ whose associated divisor is linearly equivalent to $(\xi_j)-(\beta_j(t))+j \cdot (w_j)-(j-1) \cdot (v_j)$. We verify, using the reduction algorithm, that the Pflueger reduced divisor of $t+[D']$ is  $\sum_{j \in S}(\xi_j)+\sum_{j \notin S}(\beta_j(t))-g \cdot (w_g)$. Hence, $[D]=t+[D'] \in t+\mathfrak{T}_S$. 
\end{proof}

Next, we compute the Pflueger reduced divisor of elements in the group generated by $[D_1],\dots,[D_k]$. The following proposition will be turn out to be useful. Let $\Gamma_1$ be a single loop (of length $\ell_1$)  with a marked point $w_1$. We identify the points of $\Gamma_1$ with elements in $\mathbb{R}/(\ell_1 \cdot \mathbb{Z})$ by taking a point $q \in \Gamma_1$ to the element in $\mathbb{R}/(\ell_1 \cdot \mathbb{Z})$ corresponding to its  anticlockwise distance from $w_1$. Note that $\mathbb{R}/(\ell_1 \cdot \mathbb{Z})$ is naturally a $\mathbb{Z}$-module (via the multiplication action). Let $D=\sum_{i=1}^{N-1} \alpha_i (q_i)$ be a non-zero principal divisor on $\Gamma_1$ where each $q_i$ is distinct and is in increasing order with respect to its anticlockwise distance from $w_1$.    Let $\langle q_i \rangle$ be the element of $\mathbb{R}/(\ell_1 \cdot \mathbb{Z})$ corresponding to $q_i$  \footnote{Note that Pflueger \cite{Pfl17} uses this notation in a slightly different sense.}.





\begin{proposition}\label{prinlindep_prop}
The support of $D$, as a subset of $\mathbb{R}/(\ell_1 \cdot \mathbb{Z})$, is linearly dependent over $\mathbb{Z}$. Furthermore, $\sum_{i=1}^{N-1} \alpha_i \langle q_i \rangle \equiv 0 \pmod {\ell_1}$. 
\end{proposition}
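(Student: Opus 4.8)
The plan is to extract an explicit nontrivial $\mathbb{Z}$-linear relation among the support points directly from a rational function defining $D$, thereby proving the congruence in the ``furthermore'' clause first and deducing the linear dependence as an immediate corollary. Since $D$ is principal, I would begin by fixing a continuous piecewise-linear function $f$ on $\Gamma_1$ with integer slopes for which $D = \operatorname{div}(f)$. Recall that the coefficient $\alpha_i$ of $(q_i)$ equals the sum of the outgoing slopes of $f$ at $q_i$; parametrizing $\Gamma_1$ by anticlockwise arc length from $w_1$ so that $\langle q_i \rangle = x_i$ with $0 \le x_1 < \cdots < x_{N-1} < \ell_1$, and writing $s_i$ for the (integer) slope of $f$ on the arc joining $q_i$ to $q_{i+1}$ (indices read cyclically, the last arc wrapping through $w_1$ with slope $s$), this says $\alpha_i = s_i - s_{i-1}$, where $s_0 := s$.

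Two elementary facts then drive the computation. First, because $f$ is a genuine (single-valued) function on the loop, it returns to its starting value after one anticlockwise turn; integrating the slope gives $\sum_i s_i \, |A_i| = 0$, where $|A_i|$ is the length of the $i$-th arc. (In particular $\sum_i \alpha_i = s - s = 0$, recovering $\deg D = 0$.) Second, writing each $|A_i|$ as a difference of consecutive coordinates and regrouping the identity $\sum_i s_i |A_i| = 0$ by $x_j$ --- a summation by parts --- collapses it to $\sum_i \alpha_i x_i = s \cdot \ell_1$. Since the wrap-around slope $s$ is an integer, this is precisely $\sum_{i=1}^{N-1} \alpha_i \langle q_i \rangle \equiv 0 \pmod{\ell_1}$, which is the second assertion.

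The first assertion is then immediate: as $D \ne 0$, its coefficients $\alpha_1, \dots, \alpha_{N-1}$ are integers that are not all zero, so the relation just obtained is a nontrivial $\mathbb{Z}$-linear dependence among the elements $\langle q_i \rangle$ of $\mathbb{R}/(\ell_1 \mathbb{Z})$, exhibiting the support of $D$ as linearly dependent over $\mathbb{Z}$.

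I expect the only real obstacle to be the bookkeeping around the wrap-around arc through $w_1$, together with the boundary case in which $w_1$ itself lies in the support (so some $x_j = 0$); in that case the term $\alpha_j \langle q_j \rangle$ contributes $\alpha_j \cdot 0 = 0$ and is harmless, but the cyclic indexing in the summation by parts must be arranged so that the slope $s$ on the arc meeting $w_1$ is correctly identified. A more conceptual alternative would invoke the tropical Abel--Jacobi theorem for the genus-one curve $\Gamma_1$, under which the class of a degree-zero divisor $\sum_i \alpha_i (q_i)$ in ${\rm Jac}(\Gamma_1) \cong \mathbb{R}/(\ell_1 \mathbb{Z})$ is exactly $\sum_i \alpha_i \langle q_i \rangle$ and vanishes iff the divisor is principal; I would nonetheless prefer the self-contained summation-by-parts argument to keep the proposition elementary.
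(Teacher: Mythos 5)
Your proposal is correct and follows essentially the same route as the paper's proof: both fix a rational function $f$ with $\operatorname{div}(f)=D$, integrate its (integer) slopes once around the loop to get $\sum_i s_i\,|A_i|=0$, and sum by parts using $\alpha_i=s_i-s_{i-1}$ to obtain $\sum_i \alpha_i\langle q_i\rangle\equiv 0 \pmod{\ell_1}$, with the same care for the wrap-around arc and the case $q_1=w_1$. The only cosmetic difference is the order of the two conclusions --- you derive the congruence first and read off the dependence (which is legitimate, since $D\neq 0$ of degree zero forces at least two nonzero $\alpha_i$), whereas the paper states the dependence first.
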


\begin{proof}
Since $D$ is a principal divisor on $\Gamma_1$, there exists a rational function $f_{\Gamma_1}$ on $\Gamma_1$ whose divisor is $D$. Since every point of $\Gamma_1$ has valence two,  the bend locus of $f_{\Gamma_1}$ is precisely the support of $D$. Let $(\zeta_1,\dots,\zeta_{N-1})$  be the sequence of anticlockwise distances of points in the support of $D$ to $w_1$ in increasing order. Set $\zeta_0=0$. Let $s_i \in \mathbb{Z}$ be the  slope of $f_{\Gamma_1}$ along the segment $(\zeta_{i \mod N}, \zeta_{(i+1)\mod N})$, if $\zeta_{i \mod N}$ and $\zeta_{(i+1)\mod N}$ are distinct and zero, otherwise (this can happen only if $i=0$).   Since $f_{\Gamma_1}$ is piecewise linear,  integrating the differential of $f_{\Gamma_1}$ around $\Gamma_1$ and using the fundamental theorem of calculus, we have:
\begin{center}
$\sum_{i=0}^{N-2} s_i(\zeta_{i+1}-\zeta_i)+s_{N-1}(\ell_1-\zeta_{N-1})=0$
\end{center}

 Rearranging terms, we obtain:

\begin{center}
$\sum_{i=0}^{N-2} (s_i-s_{i+1}) \zeta_{i+1}=-s_{N-1} \ell_1$
\end{center}

Hence,  the projection of $\zeta_1,\dots \zeta_{N-1}$ onto $\mathbb{R}/(\ell_1 \cdot \mathbb{Z})$ (this is the set $\{\langle q_i \rangle \}_{i=1}^{N-1}$ ) is linearly dependent over $\mathbb{Z}$.  Furthermore, suppose that $q_1 \neq w_1$ then we note that since each $q_i$ is distinct and that $\alpha_i=s_{i}-s_{i-1}$ for each $i$ from one to $N-1$ to conclude that $\sum_{i=1}^{N-1} \alpha_i \langle q_i \rangle \equiv 0 (\mod \ell_1)$. On the other hand, if $q_1=w_1$ then $\alpha_i=s_{i}-s_{i-1}$ for each $i$ from two to $N-1$ and  since $\zeta_1=0$, we obtain $\sum_{i=1}^{N-2} (s_i-s_{i+1}) \zeta_{i+1}=-s_{N-1} \ell_1$ and hence, $\sum_{i=1}^{N-1} \alpha_i \langle q_i \rangle \equiv 0 (\mod \ell_1)$. 

\end{proof}

 Let $[D_1],\dots,[D_k] \in {\rm Jac}(\Gamma_g)$.  Suppose that $D_i=\sum_{j=1}^{g} (\xi_{i,j})-g \cdot (w_g)$ be the Pflueger reduced divisor of $[D_i]$ for each $i$ from one to $k$. Let $\tau_{i,j}$ be the element in $\mathbb{R}/(\ell_j \cdot \mathbb{Z})$ corresponding to $\xi_{i,j}$.

\begin{lemma}\label{pflgred_lem}
For each $j$ from one to $g$, the $j$-th component $\psi_j$ of the Pflueger reduced divisor of $\sum_{i=1}^{k} \alpha_i[D_i]$ satisfies the following equation:

\begin{center}

$\langle \psi_j  \rangle \equiv \sum_{i=1}^{k} \alpha_i \tau_{i,j}+(j-1) (\sum_{i=1}^{k} \alpha_i-1) \ell(v_jw_j) (\mod \ell_j)$

\end{center}

where $\langle \psi_j  \rangle$ is the element in $\mathbb{R}/(\ell_j \cdot \mathbb{Z})$ corresponding to $\psi_j$.
\end{lemma}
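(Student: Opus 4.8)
The plan is to run Pflueger's reduction algorithm (quoted before Proposition \ref{subgroup_prop}) on the explicit representative
\[
E \;=\; \sum_{i=1}^{k}\alpha_i D_i \;=\; \sum_{j=1}^{g}\Big(\sum_{i=1}^{k}\alpha_i\,(\xi_{i,j})\Big)\;-\;gA\cdot(w_g),\qquad A:=\sum_{i=1}^{k}\alpha_i,
\]
of the class $\sum_i\alpha_i[D_i]$, and to read off the $j$-th component of the output loop by loop. The engine of the computation is a single-loop additivity statement: if $F=\sum_t\beta_t(p_t)$ is a divisor of degree $d$ on the loop $L_j$ (with base point $w_j$, so that $\langle w_j\rangle=0$), then $F$ is linearly equivalent, with respect to both $L_j$ and $\Gamma_g$, to $(\psi)+(d-1)(w_j)$, where $\psi$ is the unique point supplied by Pflueger's Lemma 3.3 \cite[Lemma 3.3]{Pfl17} for the degree-one class $[F-(d-1)(w_j)]$. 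Applying the forward direction of Proposition \ref{prinlindep_prop} to the principal divisor $F-(d-1)(w_j)-(\psi)$ gives $\langle\psi\rangle\equiv\sum_t\beta_t\langle p_t\rangle\pmod{\ell_j}$; that is, the coordinate of the reduced point is simply the $\langle\cdot\rangle$-weighted sum of the input coordinates.

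With this in hand I would process the loops from $L_1$ to $L_{g-1}$, each time reducing the divisor on $L_j$ to degree one and transporting the excess degree to the right using the $\Gamma_g$-principal relation $(w_j)-(v_{j+1})\sim 0$; the transported degree therefore enters $L_{j+1}$ concentrated at the point $v_{j+1}$. Writing $m_j$ for the amount of degree pushed into $L_j$ from the left, the divisor actually reduced on $L_j$ is $\sum_i\alpha_i(\xi_{i,j})+m_j(v_j)$, of degree $A+m_j$, so the recurrence is $m_1=0$ and $m_{j+1}=(A+m_j)-1$, whence $m_j=(j-1)(A-1)$. Using the convention that the anticlockwise distance of the branch point $v_j$ from $w_j$ equals $\ell(v_jw_j)$, the single-loop additivity statement yields
\[
\langle\psi_j\rangle\;\equiv\;\sum_{i=1}^{k}\alpha_i\,\tau_{i,j}\;+\;m_j\,\langle v_j\rangle\;=\;\sum_{i=1}^{k}\alpha_i\,\tau_{i,j}\;+\;(j-1)(A-1)\,\ell(v_jw_j)\pmod{\ell_j},
\]
which is exactly the claimed identity.

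For the last loop I would invoke the final step of the algorithm (``add and subtract $g\cdot(w_g)$''): after processing $L_{g-1}$ the divisor on $L_g$ is $\sum_i\alpha_i(\xi_{i,g})+m_g(v_g)-gA(w_g)$, and rewriting it in the form $(\psi_g)-g(w_g)$ forces $\psi_g$ to be the degree-one reduction of $\sum_i\alpha_i(\xi_{i,g})+m_g(v_g)+(g-gA)(w_g)$; since $\langle w_g\rangle=0$ the $w_g$-terms drop out and the same formula holds at $j=g$. The cases $j=1$ (no incoming degree, and $L_1$ carries only the branch point $w_1$, so the correction term vanishes) and the degree count on $L_g$ (the leftover degree is exactly $1-g$, matching the target $(\psi_g)-g(w_g)$) serve as consistency checks. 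I expect the only genuine work to be bookkeeping rather than conceptual: getting the recurrence $m_j=(j-1)(A-1)$ right, confirming that the pushed degree always lands at $v_j$ and not at $w_j$, and fixing the orientation convention so that $\langle v_j\rangle=\ell(v_jw_j)$ rather than $\ell_j-\ell(v_jw_j)$. Proposition \ref{prinlindep_prop} together with Pflueger's uniqueness of reduced divisors does all the conceptual lifting.
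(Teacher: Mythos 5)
Your proposal is correct and follows essentially the same route as the paper: run Pflueger's reduction loop by loop, observe that the degree pushed into $L_j$ is $(j-1)(\sum_i\alpha_i-1)$ concentrated at $v_j$, identify $\psi_j$ as the unique point linearly equivalent to the resulting explicit degree-one divisor on $L_j$, and apply Proposition \ref{prinlindep_prop} with $\langle w_j\rangle=0$ and $\langle v_j\rangle=\ell(v_jw_j)$. The only detail the paper adds that you omit is the remark that the points $v_j$, $w_j$ and the $\xi_{i,j}$ need not be distinct, which does not affect the congruence after grouping coefficients.
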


\begin{proof}
We apply the reduction algorithm to the divisor $\sum_{i=1}^{k} \alpha_i D_i$ to deduce that the $j$-th component $\psi_j$ of its Pflueger reduced divisor is  the unique point  in $L_j$ that is linearly equivalent to  $\sum_{i=1}^{k} \alpha_i (\xi_{i,j})-(j-1)(1-\sum_{i=1}^{k} \alpha_i)(v_j)+j(1-\sum_{i=1}^{k} \alpha_i)(w_j)$.  Hence, $(\psi_j)-\sum_{i=1}^{k} \alpha_i (\xi_{i,j})+(j-1)(1-\sum_{i=1}^{k} \alpha_i)(v_j)-j(1-\sum_{i=1}^{k} \alpha_i)(w_j)$ is a principal divisor on $L_j$.

We apply Proposition \ref{prinlindep_prop} to this principal divisor to obtain the congruence $\langle \psi_j  \rangle \equiv \sum_{i=1}^{k} \alpha_i \tau_{i,j}+(j-1) (\sum_{i=1}^{k} \alpha_i-1) \ell(v_jw_j) (\mod \ell_j)$.  Note that the collection of points $v_j,~w_j$ and $\{\tau_{i,j}\}_{i=1}^{k}$ need not be distinct but this does not affect the congruence. 
\end{proof}

As a corollary, we obtain the following explicit characterisation of the affine lattices in the decomposition of $Q^{(r,d)}_{\Gamma_g}$ according to Proposition \ref{qrddec_prop}.

\begin{corollary}\label{affinetor_cor}

Suppose that the affine lattice $\hat{\Lambda}$ is the fibre over the coset $t+\mathfrak{T}_S$ of $\bar{\phi}_{\Gamma_g}$, then $\hat{\Lambda}$ as a set consists of points $(\alpha_1,\dots,\alpha_k) \in \mathbb{Z}^k$ satisfying the linear system of equations:
\begin{equation}\label{affsys_eq}
\sum_{i=1}^{k} \alpha_i \tau_{i,j}+(j-1)(\sum_{i=1}^{k} \alpha_i) \ell(v_jw_j) \equiv \beta_j(t)+(j-1)\ell(v_jw_j) {\mod \ell_j}
\end{equation}

for every $j \notin S$, $\beta_j(t)$ is as in Proposition \ref{stasubchar_prop} and $\tau_{i,j}$ is as in Lemma \ref{pflgred_lem}. 

\end{corollary}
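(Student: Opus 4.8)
The plan is to combine the two structural results already established: Proposition~\ref{stasubchar_prop}, which characterizes membership in a coset $t+\mathfrak{T}_S$ in terms of the components of the Pfl\"ueger reduced divisor indexed by $j \notin S$, and Lemma~\ref{pflgred_lem}, which gives an explicit congruence for each component $\psi_j$ of the Pfl\"ueger reduced divisor of a general integer combination $\sum_{i=1}^{k}\alpha_i[\bar{D}_i]$. First I would recall that $\hat{\Lambda}$ is by hypothesis the fiber of $\phi_{\Gamma_g,\bar{D_1},\dots,\bar{D_k}}$ over the coset $t+\mathfrak{T}_S$, so a point $(\alpha_1,\dots,\alpha_k) \in \mathbb{Z}^k$ lies in $\hat{\Lambda}$ precisely when $\phi_{\Gamma_g,\bar{D_1},\dots,\bar{D_k}}(\alpha_1,\dots,\alpha_k) = [\sum_{i=1}^{k}\alpha_i \bar{D}_i]$ belongs to $t+\mathfrak{T}_S$.

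Next I would apply Proposition~\ref{stasubchar_prop} to translate this membership condition into a pointwise statement: the class $[\sum_i \alpha_i \bar{D}_i]$ lies in $t+\mathfrak{T}_S$ if and only if, for every $j \notin S$, the $j$-th component of its Pfl\"ueger reduced divisor equals $\beta_j(t)$. This reduces the problem to reading off the $j$-th components for $j \notin S$ and setting them equal to $\beta_j(t)$. I would then invoke Lemma~\ref{pflgred_lem} directly, applied to the divisors $\bar{D}_1,\dots,\bar{D}_k$ with coefficients $\alpha_1,\dots,\alpha_k$, to substitute the explicit congruence
\[
\langle \psi_j \rangle \equiv \sum_{i=1}^{k}\alpha_i \tau_{i,j} + (j-1)\Bigl(\sum_{i=1}^{k}\alpha_i - 1\Bigr)\ell(v_jw_j) \pmod{\ell_j}
\]
for each $j$. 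Imposing $\langle \psi_j \rangle \equiv \beta_j(t)$ for $j \notin S$ and rearranging the term $-(j-1)\ell(v_jw_j)$ to the right-hand side yields exactly the system in Equation~\eqref{affsys_eq}.

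The bulk of the work is therefore bookkeeping rather than new content, since both ingredients are already in place; the argument amounts to composing them in the correct order and tracking the constant $(j-1)\ell(v_jw_j)$ contributed by the $(j-1)(\sum_i \alpha_i - 1)\ell(v_jw_j)$ term. The one point I would take care over is the direction of the equivalence: Proposition~\ref{stasubchar_prop} supplies an ``if and only if,'' and Lemma~\ref{pflgred_lem} gives the component values as a genuine equality of classes in $\mathbb{R}/(\ell_j\cdot\mathbb{Z})$, so the set of $(\alpha_1,\dots,\alpha_k)$ satisfying~\eqref{affsys_eq} coincides exactly with $\hat{\Lambda}$ and not merely contains it. The main conceptual check—if there is one at all—is confirming that $\tau_{i,j}$ here refers to the Pfl\"ueger coordinates of $\bar{D}_i$ (equivalently of $D_i$ after the degree-normalizing shift by $w_g$), so that the notation matches the hypotheses of Lemma~\ref{pflgred_lem}; once the indexing is aligned, the corollary follows immediately.
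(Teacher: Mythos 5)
Your proposal is correct and follows exactly the route the paper intends: the corollary is stated without proof immediately after Lemma~\ref{pflgred_lem} precisely because it is the composition of Proposition~\ref{stasubchar_prop} (membership in $t+\mathfrak{T}_S$ is detected by the components indexed by $j \notin S$) with the congruence of Lemma~\ref{pflgred_lem}, followed by the rearrangement of the $-(j-1)\ell(v_jw_j)$ term that you carry out. Your care over the ``if and only if'' direction and over the fact that the $\tau_{i,j}$ are the Pfl\"ueger coordinates of the degree-normalized divisors $\bar{D_i}$ is exactly the right bookkeeping.
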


We can construct $\hat{\Lambda}$  as follows:  consider the lattice $\Lambda$ defined by the linear system of equations:

\begin{equation}\label{linsys_eq}
\sum_{i=1}^{k} \alpha_i \tau_{i,j}+(j-1)(\sum_{i=1}^{k} \alpha_i) \ell(v_jw_j) \equiv 0{\mod \ell_j}
\end{equation}

Suppose that ${\bf q} \in \mathbb{Z}^k$ is a solution to Equation (\ref{affsys_eq}).  We have $\hat{\Lambda}={\bf q}+\Lambda$.

In the following, we describe the lattice $\Lambda$. Let $\mathcal{M}_j$ be the submodule of $\mathbb{R}/(\ell_j \cdot \mathbb{Z})$ (as a $\mathbb{Z}$-module) generated by $\tau_{1,j},\dots,\tau_{k,j},\ell(v_jw_j)$.  Let the lattice $\Lambda_j$ be the intersection of the syzygy module (a sublattice of $\mathbb{Z}^{k+1}$) of the finitely generated module $\mathcal{M}_j$ (with respect to the induced generating set) and the lattice $\{(y_1,\dots,y_{k+1})|~y_{k+1}=(j-1)\sum_{i=1}^{k}y_i\} \cap \mathbb{Z}^{k+1}$.  
The lattice $\Lambda=\cap_{j \notin S}\Lambda_j$. Since each $\mathcal{M}_j$  is a finitely generated Abelian group, the syzygy module can be described in terms of the syzygies with respect to a standard generating set (a basis for the free summand and a generator for each cyclic summand) and a homomorphism between two free Abelian groups. We omit the details.

 \section{An Algorithm to Compute the Poincar\'e Series of Divisors on a Chain of Loops}\label{poicolalg_sect}

We present an algorithm to compute the Poincar\'e series of divisors on a chain of loops (given in terms of the edge lengths and the divisors). The algorithm is presented at a ``high level" with the purpose of aiding practical computation. The algorithm relies on three key computations: i. Decomposing a $W^r_d$ into a finite union of standard topological subtori, given $r$ and $d$, from \cite[Section 3]{Pfl17}.  ii. Computing the affine lattices in the decomposition of $Q^{(r,d)}_{\Gamma_g}$, as described in Subsection \ref{afflatcon_subsect}. iii. Computing the lattice point enumerating function of a rational polyhedron (given as an $\mathcal{H}$-polyhedron) with respect to an affine sublattice of $\mathbb{Z}^k$ (given in terms of a basis for the underlying lattice along with a translate) from \cite[Theorem 4.4]{Bar99} (also, see the second remark after Theorem 5.3 of this paper). 


\begin{breakablealgorithm} \label{Poincol_algo}
\caption{An Algorithm to Compute the Poincar\'e Series of Divisors on a Chain of Loops.}
\begin{algorithmic}

\State {\bf Input:} A chain of loops $\Gamma_g$ of genus $g$ and divisors $D_1,\dots,D_k$. 

\ForEach{$i$ from one to $k$} 
 \State Compute $d_i$ the degree of $D_i$ and $\bar{D}_i=D_i-d_i(w_g)$.
 \EndFor
 
 \State Set $P:=0$

\ForEach{Pair of integers $r,~d$ such that $d \in [0,\dots,2g-2]$ and $r \in [0,\dots,d]$}
 
  \If{$W^r_d \neq \emptyset$}
      \State Decompose $W^r_d$ into a finite union $\cup_i (\kappa_i+T_{S_i})$ of standard topological tori. 
      \State Set $\mathcal{F}:=\emptyset$.
      \ForEach{Subtorus $\kappa_i+T_{S_i}$ in the decomposition}
         \If{$(\kappa_i+T_{S_i}) \cap \langle [\bar{D}_1],\dots,[\bar{D}_k] \rangle \neq \emptyset$}
              \State Include $i$ into $\mathcal{F}$.
              \State Compute the affine lattice $\bar{\Lambda}_i=\bar{\phi}_{\Gamma_g}^{-1}(\kappa_i+T_{S_i})$. 
              \State Compute $\hat{\Lambda}_i:=\bar{\Lambda}_i \cap \{(n_1,\dots,n_k) \in \mathbb{Z}^k|~\sum_{i=1}^{k}n_id_i=d\}$.   
         \EndIf
       \EndFor
     \ForEach{Non-empty subset $U$ of $\mathcal{F}$}
       \State Compute $\hat{\Lambda}_{U}=\cap_{i \in U}\hat{\Lambda}_i$.
       \If{$\hat{\Lambda}_{U} \neq \emptyset$}
        \State Compute the lattice point enumerating function $f_U$ of $\mathbb{R}^k_{\geq 0}$ with respect to the affine lattice $\hat{\Lambda}_{U}$. 
       \Else
         \State Set $f_U:=0$. 
       \EndIf
     \EndFor
     \State Set $P^{(r,d)}:=\sum_{U \subseteq \mathcal{F}, ~|U|=1} f_U-\sum_{U \subseteq \mathcal{F}, ~|U|=2}f_U+\sum_{U \subseteq \mathcal{F}, ~|U|=3}f_U+\cdots+{-1}^{(|\mathcal{F}|+1)}f_{\mathcal{F}}$.
  \EndIf
  
   \If{$W^r_d = \emptyset$} 
    \State Set  $P^{(r,d)}:=0$.
   \EndIf
   \State Add $P^{(r,d)}$ to $P$.
\EndFor
\State  Compute the lattice point enumerating function  $f_Q$ (with respect to $\mathbb{Z}^k$) of $Q=\{(n_1,\dots,n_k) \in \mathbb{R}^k|~n_i \geq 0 \text{ for all }  i, \sum_{i=1}^{k}n_id_i \geq 2g-1\}$.

\State Add $(\sum_{i=1}^{k} d_i\partial_{z_i} -(g-1))f_Q$ to $P$.

  \State {\bf Output:} $P$.

\end{algorithmic}
\end{breakablealgorithm}

\subsection{An Example}
We illustrate the algorithm in a simple example. 
\begin{example} \rm
Consider a generic chain of three loops as in Example \ref{langgen_ex}.  Additionally, suppose that the ratio of the lengths $\ell(v_3w_3)$ and $\ell_3$ is $1/5$ (by \cite[Definition 4.1]{CooDraPayRob12} this is permissible) . 
Let $D_1=(w_1)+(v_2)+(w_3)$ and $D_2=(w_1)-(w_3)$.  Hence, $d_1=3$ and $d_2=0$. In the following, we compute the Poincar\'e series $P_{\Gamma_3}$.
 We start by computing the set $Q^{(r,d)}_{\Gamma_3}$ (we shall use the shorthand $Q^{(r,d)}$) for each pair $(r,d)$ such that $W^r_d$ is non-empty and $0 \leq d \leq 4$. By degree considerations, we find that among these only $Q^{(0,0)},~Q^{(0,3)}$ and $Q^{(1,3)}$ are non-empty.  Using a method as in Corollary \ref{affinetor_cor}, we determine $Q^{(0,0)}=\{(0,0)\},~Q^{(0,3)}=\{(1,\eta)|~\eta \in \mathbb{Z}\}$ and $Q^{(1,3)}=\{(1,0)\} \cup \{(1,5 \eta+1)|~\eta \in \mathbb{Z}\}$.
For instance, we determine $Q^{(1,3)}$ as follows. Since $m_1d_1+m_2d_2=3$, we know that $m_1=1$. The Pflueger reduced divisor of $\bar{D}_1+m_2 \bar{D}_2$ is $(w_1)+(\xi_2)+(\xi_3)$ where $(\xi_2) \sim (m_2+1)(v_2)-m_2(w_2)$ and $(\xi_3) \sim m_2(v_3)-(m_2-1)(w_3)$. We compare this with the Pflueger reduced divisors of elements in $A_2$ and $A_3$. This combined with the assumption about the relation between $\ell_i$ and $\ell(v_iw_i)$ for $i=2,3$ allows us to compute $Q^{(1,3)}$.

With this information at hand, we compute the Poincar\'e series. In particular, $P^{(0,0)}=1, P^{(0,3)}=\dfrac{z_1}{(1-z_2)}$ and $P^{(1,3)}=z_1+\dfrac{z_1z_2}{(1-z_2^5)}$.  The series $\sum_{l>2g-2} P^{(l)}$ can be computed using the procedure described in Subsection \ref{poinlarge_subsect}. The associated lattice point enumerating function $f_Q=\dfrac{z_1^2}{(1-z_1)(1-z_2)}$ and $\sum_{l>2g-2} P^{(l)}=(3 \partial_{z_1}-2)(f_Q)$. Hence, the  Poincar\'e series $P_{\Gamma_3}=1+z_1+\dfrac{z_1}{(1-z_2)}+\dfrac{z_1z_2}{(1-z_2^5)}+(3 \partial_{z_1}-2)(f_Q)$.
\qed
\end{example}

\end{document}